\newtheorem{definition}{Definition}[section]
\newtheorem{lemma}{Lemma}[section]
\newtheorem{thm}{Theorem}[section]
\newtheorem{prop}{Proposition}[section]
\newtheorem{coro}{Corollary}[section]
\newtheorem{remark}{Remark}[section]
\newtheorem{thm*}{Theorem}[section]
\numberwithin{equation}{section}
\newcounter{example}[section]
\newenvironment{example}[1][]{\refstepcounter{example}\par\medskip
   \noindent \textbf{Example~\theexample} (#1){\bf .}  \rmfamily}{\bigskip}
\newcommand{\pr}{\partial}
\newcommand{\veps}{\varepsilon}
\newcommand{\lm}[2]{\lim\limits_{#1\to #2}}
\newcommand{\ovr}[1]{\overline{#1}}   
\newcommand{\ddt}[2]{\dfrac{d #1}{d #2}}
\newcommand{\be}{\begin{equation}}
\newcommand{\ee}{\end{equation}}
\newcommand{\bee}{\begin{equation*}}
\newcommand{\eee}{\end{equation*}}
\def\tr{\textnormal{tr}}
\def\dv{\textnormal{div}}
\def\Lap{\Delta}
\def\grad{\nabla}
\def\dint{\displaystyle\int}
\def\R{\mathbb{R}}
\def\vol{\mathrm{vol}\,}
\def\la{\langle}
\def\ra{\rangle}
\def\S{\Sigma}
\def\({\left(}
\def\){\right)}
\def\a{\alpha}
\def\b{\beta}
\def\To{\longrightarrow}
\def\bH{\mathbf{H}}
\def\hess{\textnormal{Hess}}
\def\tf{\tilde{f}}
\def\Hb{\mathbb{H}}
\def\s{\sigma}
\def\N{\mathcal{N}}
\def\w{\omega}
\def\g{\gamma}
\def\V{\mathcal{V}}
\def\B{\mathcal{B}}
\def\tb{\tilde{b}}
\def\k{\kappa}
\def\bk{b_{\k}}
\def\Vk{V_{\k}}
\def\ek{e_{\k}}
\def\bO{\overline{\Omega}}
\def\bS{\mathbb{S}}
\def\graph{\textnormal{graph}}
\def\bM{\mathbf{M}}
\def\ADM{\textnormal{ADM}}
\def\m{\mathbf{m}}
\def\df{\grad^b f}
\def\dkf{\grad^{b_{\k}} f}
\def\defeq{\coloneqq}
\title[On the Stability of the PMT for AH Graphs]{On the Stability of the Positive Mass Theorem for Asymptotically Hyperbolic Graphs}
\author[{Cabrera Pacheco}]{Armando J. {Cabrera Pacheco}}
\address{Department of Mathematics, Universit\"at T\"ubingen,  72076 T\"{u}bingen, Germany.}
\email{cabrera@math.uni-tuebingen.de}
\begin{document}

\begin{abstract}
The positive mass theorem states that the total mass of a complete asymptotically flat manifold with non-negative scalar curvature is non-negative; moreover, the total mass equals zero if and only if the manifold is isometric to the Euclidean space. Huang and Lee  \cite{H-L} proved the stability of the positive mass theorem for a class of $n$-dimensional ($n \geq 3$) asymptotically flat graphs with non-negative scalar curvature, in the sense of currents. Motivated by their work and using results of Dahl, Gicquaud and Sakovich \cite{D-G-S}, we adapt their ideas to obtain a similar result regarding the stability of the positive mass theorem, in the sense of currents, for a class of $n$-dimensional $(n \geq 3)$ asymptotically hyperbolic graphs with scalar curvature bigger than or equal to $-n(n-1)$.
\end{abstract}

\maketitle

\section{Introduction}
\thispagestyle{empty}

Asymptotically hyperbolic manifolds arise in a natural way as spacelike slices of spacetimes satisfying the Einstein equations with negative cosmological constant $\Lambda$. Roughly speaking, a Riemannian manifold $(M^n,g)$ is asymptotically hyperbolic if the metric $g$ approaches the metric of the hyperbolic space $\Hb^n$ at infinity at an appropriate rate. In the asymptotically flat case (when $\Lambda =0$ and the metric $g$ approaches the Euclidean metric at infinity), there is a well-known notion of total mass, called ADM mass \cite{ADM, Bartnik}. The positive mass theorem proven by Schoen and Yau \cite{S-Y-PMT,S-Y-PMT2} and Witten \cite{W-PMT}, states that if $(M^n,g)$ is an asymptotically flat manifold with non-negative scalar curvature, then its total mass, $m_{\ADM}(g)$, is non-negative. In addition, it has a rigidity statement which says that $m_{\ADM}(g)=0$ if and only if $(M^n,g)$ is isometric to the Euclidean space of dimension $n$. 

The rigidity statement of the positive mass theorem for asymptotically flat manifolds leads naturally to a stability question: if $m_{\ADM}(g)$ is close to 0, is $(M^n,g)$ close to the Euclidean space of dimension $n$ is some sense? There has been various results in this direction by Allen \cite{Allen1}, Bray and Finster \cite{B-F}, Corvino \cite{Corvino}, Finster \cite{Finster}, Finster and Kath \cite{F-K}, Huang and Lee \cite{H-L}, Huang, Lee and Sormani \cite{H-L-S}, Lee \cite{Lee} and Lee and Sormani \cite{L-S}. We refer the reader to \cite{H-L-S} and \cite{S-S} for a brief description and short history of this problem.

The definition of total mass of an asymptotically hyperbolic manifold is more subtle and has been introduced by Wang \cite{Wang} and Chru\'sciel and Herzlich \cite{C-H}. Moreover, Wang in \cite{Wang} and Chru\'sciel and Herzlich in \cite{C-H} proved the positive mass theorem for asymptotically hyperbolic manifolds which are spin. Different versions of this theorem have been obtained by Andersson, Cai and Galloway in \cite{A-C-G} and, recently, by Chru\'sciel, Galloway, Nguyen and Paetz in \cite{C-G-N-P}, imposing conditions on the geometry at infinity. The stability problem in the asymptotically hyperbolic setting has been studied by Dahl, Gicquaud and Sakovich for conformally hyperbolic manifolds \cite{D-G-S-stability} adapting the ideas of Lee \cite{Lee}, and by Allen \cite{Allen2}, assuming that the manifolds can be foliated by a solution of the Inverse Mean Curvature Flow. Recently, Sakovich and Sormani proved the stability of the positive mass theorem for spherically symmetric asymptotically hyperbolic manifolds with respect to the intrinsic flat convergence \cite{S-S}, using similar methods to those developed by Lee and Sormani in \cite{L-S}. We also refer to \cite{S-S} for a very nice exposition of the history and description of results related to this problem.

In this work, we study the stability of the positive mass theorem for asymptotically hyperbolic graphs, adapting to this setting the ideas of Huang and Lee \cite{H-L}, and making use of results of Dahl, Gicquaud and Sakovich \cite{D-G-S}. We consider graphs over the hyperbolic space of dimension $n$, denoted by $\Hb^n$, inside $\Hb^{n+1}$. Here we use the model $\Hb^{n+1}=\R \times \Hb^n$ with a metric of the form $\bar{b}=V^2ds^2 + b$, where $b$ is the metric of $\Hb^n$, so that the set determined by setting $s$ equal to a constant is isometric to $\Hb^n$. An asymptotically hyperbolic graph is, in general terms, a continuous function $f:\Hb^n \setminus \Omega \To \R$ which is smooth on $\Hb^n \setminus \ovr{\Omega}$, where $\Omega \subset \Hb^n$ is open and has connected complement, and satisfies appropriate decay conditions (see Definition \ref{AHgraph-def}). If $\Omega \neq \emptyset$ we require $f$ to have a minimal boundary  (see Definition \ref{AHgraph-def}). 

Our main result is the following. 
 
\begin{thm}\label{main}
Let $M^i$ be a sequence of balanced asymptotically hyperbolic graphs  in $\Hb^{n+1}$ with scalar curvature $R \geq -n(n-1)$ ($n \geq 3$), and mean curvature vector pointing upward. Suppose that almost every level set of $M^i$ is star-shaped and outer-minimizing in the hyperbolic hyperplane. Let $\m^i$ denote the mass of $M^i$ and normalize the height so that the level set $M^i \cap \{s=0 \}$ has volume equal to  $\max \{ 2 \beta (\m^i)^{\frac{n-1}{n-2}}\w_{n-1},2 \beta \w_{n-1} \m^i \}$, where $\w_{n}$ denotes the volume of the standard unit round sphere $\bS^n$ and $\b>1$ is any fixed constant.  Then, if $\m^i \to 0$, then $\{ M^i \}$ converges to $\{ s = 0 \}$ in the sense of currents. 
\end{thm}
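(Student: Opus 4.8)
The plan is to follow the scheme of Huang–Lee \cite{H-L}, replacing the Euclidean slicing/coarea estimates by their asymptotically hyperbolic analogues from Dahl–Gicquaud–Sakovich \cite{D-G-S}. The first step is to derive, for a single balanced asymptotically hyperbolic graph $M = \graph(f)$ with $R \ge -n(n-1)$ and upward-pointing mean curvature vector, a lower bound for the mass $\m$ in terms of an integral over $\Hb^n \setminus \Omega$ of a nonnegative quantity built from the geometry of the graph — concretely, the scalar curvature constraint together with the graph structure should yield an expression of the form $\m \ge c_n \int_{\Hb^n \setminus \Omega} (R + n(n-1)) \,\tfrac{V}{\sqrt{1 + |\df|^2_b}}\,dV_b + (\text{boundary term})$, where the boundary term over $\pr \Omega$ is controlled by the minimality and outer-minimizing hypotheses (this is exactly the kind of Heintze–Karcher/coarea identity used in \cite{D-G-S} to prove the PMT for such graphs). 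In particular $\m \ge 0$, and if $\m^i \to 0$ then the ``bad set'' where $|\df^i|_b$ is large, or where $R^i + n(n-1)$ is bounded away from zero, must have small measure.

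Next I would run the coarea argument slice by slice. Since almost every level set $\S_t = \{f = t\}$ is star-shaped and outer-minimizing in the hyperbolic hyperplane, one can compare it to a coordinate sphere of the same enclosed volume and apply the Minkowski-type inequality in $\Hb^n$ available from \cite{D-G-S} (or the monotonicity along inverse mean curvature flow) to bound the ``mass contribution'' of each slice from below by a multiple of $t$-derivative information. The normalization of the height so that $M^i \cap \{s=0\}$ has volume $\max\{2\beta(\m^i)^{(n-1)/(n-2)}\w_{n-1}, 2\beta \w_{n-1}\m^i\}$ is chosen precisely so that, when $\m^i$ is small, the region of $\Hb^n$ over which $|f^i|$ can be large has volume comparable to $\m^i$ (up to the factor $\beta$), forcing the graph to be $C^0$-close to the slice $\{s=0\}$ outside a set of small $b$-measure. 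This is where I expect the main obstacle: making the slice-by-slice comparison uniform in $i$ and converting the smallness of the mass integral into a genuine measure-theoretic smallness of the set $\{|f^i| > \epsilon\} \cup \{|\df^i|_b > \epsilon\}$, while simultaneously controlling the boundary $\pr\Omega^i$ — which is allowed to move — so that it shrinks or stays within a region of small measure. The hyperbolic volume growth (exponential, rather than polynomial as in the Euclidean case) is both a help, since it penalizes large excursions strongly, and a technical nuisance in the weighted estimates.

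Finally, with the graphs $M^i$ realized as integral currents in $\Hb^{n+1} = \R \times \Hb^n$, I would show convergence to the current $\{s=0\}$ in the sense of currents (flat/weak convergence of integral currents on compact sets, together with control of the mass at infinity coming from the decay conditions in Definition \ref{AHgraph-def}). Concretely: outside a set $U^i \subset \Hb^n$ with $\vol_b(U^i) \to 0$, the graph functions $f^i$ and their gradients are uniformly small, so on $(\Hb^n \setminus U^i)$ the currents $[\![ M^i ]\!]$ and $[\![ \{s=0\} ]\!]$ agree up to a current of small mass (graph of a small function over a fixed domain); over $U^i$, the portion of $[\![ M^i ]\!]$ has mass bounded by the area of the graph over $U^i$, which the mass lower bound plus the height normalization controls. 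Summing these contributions and letting $i \to \infty$ gives $\mathbf{M}([\![ M^i ]\!] - [\![ \{s=0\} ]\!] \llcorner K) \to 0$ on each compact $K$, hence convergence in the sense of currents. The routine verifications — that each $M^i$ is indeed an integral current with locally finite mass, and that the boundary terms at $\pr\Omega^i$ vanish in the limit — I would relegate to the hypotheses (minimal boundary, outer-minimizing level sets) and the decay assumptions, following the bookkeeping in \cite{H-L}.
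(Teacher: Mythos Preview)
Your proposal has the right \emph{framework} (follow Huang--Lee, feed in the Dahl--Gicquaud--Sakovich mass formula, slice by level sets, use a Minkowski-type inequality on each slice), but it misses the two concrete mechanisms that actually drive the argument in the paper, and the substitute you sketch --- controlling a ``bad set'' of small measure and getting $C^0$-closeness outside it --- is both vague and not how the proof goes.

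First, the heart of the paper is not a measure-theoretic smallness statement but a \emph{differential inequality for the level-set volume function} $\V(h)=|\S_h|$. Combining the mass formula on each regular slice (which bounds the boundary integral $\int_{\S_h} VH\frac{V^2|\nabla f|^2}{1+V^2|\nabla f|^2}$ by $c_n\m$) with the first variation of volume along level sets and the Minkowski-type inequality of Montiel--Ros, one obtains an inequality of the form
\[
\V'(h)\;\ge\; c_n\,\frac{2\m}{3\sqrt{3}}\left(\frac{\V(h)}{2\omega_{n-1}\m}-1\right)^{3/2}
\]
for almost every $h$ above the normalization height $h_0$. The right-hand side grows super-linearly in $\V$, so comparison with the corresponding ODE forces $\V(h)\to\infty$ at a finite height. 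This yields the key \emph{uniform height estimate} $0<\sup f - h_0 < C\,\m^{1/(n-2)}$ --- a pointwise bound everywhere, not merely outside a small set. Your proposal never isolates this step; you mention the Minkowski inequality slice by slice but then pivot to a bad-set/$C^0$-closeness heuristic, which is neither what is proved nor obviously sufficient.

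Second, you do not address the rescaling issue. In the Euclidean case one rescales $f\mapsto \lambda^{-1}f(\lambda\,\cdot)$ to normalize the mass to $1$; in the hyperbolic setting this fails because $\Hb^n$ is not scale-invariant. The paper introduces the one-parameter family of metrics $b_\kappa$ (hyperbolic of curvature $-\kappa^2$) and a corresponding $\kappa$-mass, checks that the rescaled function $\tilde f(r,\theta)=\kappa^{-1}(f(\kappa r,\theta)-h_0)$ with $\kappa=\m^{1/(n-2)}$ has $\m_\kappa(\tilde f)=1$ and still satisfies $R_\kappa(\tilde f)\ge -\kappa^2 n(n-1)$, and then runs the differential inequality for $\tilde f$ in $(\Hb^n_\kappa,b_\kappa)$. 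This is the technical device that makes the height bound dimensionally correct and uniform; your proposal does not mention it.

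Finally, the current-convergence step in the paper is by an explicit filling: write $\graph[f]-\{s=h_0\}=A+\partial B$ with $A$ the cap over $\bar\Omega$ and $B$ the signed region between the two graphs, split $B=B_+ + B_-$ above/below $s=h_0$, and bound $\bM_U(B_+)$ using the height estimate and $\bM_U(B_-)$ slice-by-slice using the definition of $h_0$ together with the isoperimetric inequality in $\Hb^n$; $\bM_U(A)$ is controlled by the Penrose-type inequality \eqref{RPI-AH}. Your proposed endgame (mass of the graph over a small set $U^i$) would still need exactly these two ingredients --- a uniform height bound above $h_0$ and a volume bound on the sublevel sets below $h_0$ --- and you have not supplied either.
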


We remark that in contrast to the corresponding result in \cite{H-L}, we require that the mean curvature vector points upward as a hypothesis. The reason is that in the asymptotically flat case this condition follows from assuming that scalar curvature is non-negative \cite{H-W-RPI,H-L}, while in the asymptotically hyperbolic setting, to ensure that the mean curvature of an asymptotically hyperbolic graph with scalar curvature $R \geq -n(n-1)$ does not change sign, we would need impose that $\Omega$ is convex as in \cite[Proposition 5.1]{D-G-S}. 

As a consequence of the level sets being star-shaped, Theorem \ref{main} only works when the initial data set contains a horizon with a single connected component (or no horizon at all). However, this seems to be consistent with the existing result for the Riemannian Penrose inequality in this setting (see \cite{D-G-S,L-G}). The case when the horizon has several connected components will not be discussed here.

To obtain Theorem \ref{main} we adapt the methods developed by Huang and Lee in \cite{H-L} to the asymptotically hyperbolic setting, for which we use the results obtained by Dahl, Gicquaud and Sakovich in \cite{D-G-S}. In \cite{D-G-S}, the mass of asymptotically hyperbolic graphs (possibly with a minimal boundary) in $\Hb^{n+1}$ is written as the sum of the integral of $R(f) + n(n-1)$, where $R(f)$ denotes the scalar curvature of the graph of $f$ in $\Hb^{n+1}$, over the  ``exterior region'' and a boundary integral depending only on the geometry of the minimal boundary. Using this formula and arguing as Lam in \cite{Lam}, they obtained several Riemannian Penrose-like inequalities for asymptotically hyperbolic graphs in $\Hb^{n+1}$. 

In order to prove our main result, we follow the ideas of Huang and Lee in \cite{H-L}. Recall that in \cite{H-L} the expression of the ADM mass for an asymptotically flat graph in $\R^{n+1}$ as obtained by Lam \cite{Lam}, is applied to level sets of the graphing function to derive a differential inequality for the volume of the level sets. This inequality is then used to estimate the maximum height of the graph in terms of the ADM mass. After an appropriate normalization, it follows that a sequence of asymptotically hyperbolic graphs converges to a horizontal hyperplane in $\R^{n+1}$ (i.e., isometric to $\R^n$) in the sense of currents when their masses tend to zero.

It is worth to notice that Huang and Lee in \cite{H-L} had to treat differently the cases $n=3$ and $n=4$, mainly because of the use of the Minkowski inequality in $\R^n$. Interestingly, in our setting, using a Minkowski-like inequality obtained by Montiel and Ros in \cite{M-R} and used by Dahl, Gicquaud and Sakovich in \cite{D-G-S}, the argument works for any dimension $n \geq 3$.

We will use the notation in \cite{D-G-S} and \cite{H-L} as much as possible. In addition, the structure of the article is similar to \cite{H-L}. Namely, in Section \ref{mass-AH-graphs} we provide all the necessary definitions and terminology. In Section \ref{section-rescaling} we obtain results similar to those in \cite{D-G-S} in the case when the background metric of $\Hb^{n+1}$ is rescaled in a special way; this rescaling is used to obtain a differential inequality for the volume of the level sets as in \cite{H-L}. In Section \ref{volume-estimates} we obtain the desired differential inequality and use it to estimate the maximum height of asymptotically hyperbolic graphs. Finally, in Section \ref{convergence} we prove Theorem \ref{main}.

\medskip

{\bf Acknowledgements.} I would like to thank Lan-Hsuan Huang for pointing out this problem to me, for all her support and motivating discussions. I would also like to thank Carla Cederbaum, Kwok-Kun Kwong and Jason Ledwidge for very valuable comments and discussions, and to Anna Sakovich for a thorough reading of the first draft of this work and for her very helpful observations.  In addition, I would like to thank the Carl Zeiss Foundation for the generous support. This work was partially funded by NSF grant DMS 1452477.

\section{The mass of asymptotically hyperbolic graphs} \label{mass-AH-graphs}

We start with the definition and some properties of asymptotically hyperbolic graphs in the context of general relativity. Let $\Hb^n$ denote the hyperbolic space of dimension $n$ and $b$ its metric. In polar coordinates on $\R^n$ given by $(r,\theta)$, with $\theta \in \bS^{n-1}$, the metric $b$ takes the form
\be
b=dr^2 + \sinh^2 r  \s,
\ee
where $\s$ denotes the usual round metric on $\mathbb{S}^{n-1}$. Asymptotically hyperbolic manifolds are, roughly speaking, Riemannian manifolds whose metrics approach the metric $b$ at infinity. As in the asymptotically flat case, one is interested in a global invariant referred to as total mass (in the asymptotically flat case this notion is known as the ADM mass); however, in the asymptotically hyperbolic setting, this notion is more subtle and depends on the structure of the set $\mathcal{N} = \{ V \in C^{\infty}(\Hb^n) \, | \, \hess^b V = Vb \}$ (see \cite{C-H,Michel,D-G-S}). Note that $V_{(0)} := V_{(0)} (r)= \cosh r$  and $V_{(i)}:=x^i \sinh r$, where $x^i$ denote the coordinate functions on $\R^n$ restricted to $\bS^{n-1}$, give a basis of the vector space $\mathcal{N}$. Moreover, $\mathcal{N}$ can be endowed with a Lorentzian product $\eta$, in such a way that $V_{(0)}$ is future directed, $\eta(V_{(0)},V_{(0)}) = 1$ and $\eta(V_{(i)},V_{(i)}) =-1$ for $i=1,...,n$. For a precise discussion on how the structure of $\mathcal{N}$ affects the global invariant that corresponds to the total mass, see \cite{C-H}. This notion of total mass has been successfully adapted to the context of asymptotically hyperbolic graphs in \cite{D-G-S}. We will mostly follow \cite{D-G-S}  and summarize some definitions and notation needed for our purposes, but will often refer to \cite{D-G-S} (and the references therein) for details.

\begin{definition}[\cite{D-G-S}] \label{def-AH} 
A Riemannian manifold $(M,g)$ is said to be asymtptotically hyperbolic if there exist a compact subset $K$ and a chart at infinity $\Phi:M \setminus K \to \Hb^n \setminus B$, where $B$ is a closed ball in $\Hb^n$, for which:
\begin{enumerate}
\item $\Phi_*g$ and $b$ are uniformly equivalent, i.e., there is a constant $C>0$ such that
\bee
C^{ -1} b(X,X) \leq \Phi_*g(X,X) \leq C b(X,X),
\eee
\item 
\bee
\int_{\Hb^n \setminus B} (|e|^2 + |\grad^b e |^2) \cosh r_b \, d\mu^b < \infty,
\eee
\item 
\bee
\dint_{\Hb^n \setminus B} | R(g) + n(n-1)| \cosh r_b \, d\mu^b < \infty,
\eee
where $e=\Phi_* g - b$, $r_b$ is the hyperbolic distance from an arbitrary point in $\Hb^n$ and $R(g)$ is the scalar curvature of $g$.
\end{enumerate}
\end{definition}

\begin{definition}[\cite{D-G-S}]\label{AH-functional}
The mass functional of $(M,g)$, with respect to $\Phi$, is the functional defined on $\N$ by
\be\label{mass-functional}
H_{\Phi}(V)=\lm{r}{\infty} \int_{S_r} (V(\dv^b e - d \tr^b e) + (\tr^b e) dV - e(\grad^b V, \cdot))(\nu_r) d\mu^b,
\ee
where $S_r$ denotes the coordinate sphere of radius $r$ in $\Hb^n$, and $\nu_r$ the outward normal vector to $S_r$.
\end{definition}

Under the conditions of Definition \ref{def-AH} this limit exists and is finite. Moreover, if $A$ is an isometry of the hyperbolic space with respect to the metric $b$, then $H_{A \circ \Phi} (V) = H_{\Phi}(V \circ A^{ -1})$ (see \cite{C-H}). If $H_{\Phi}(V) > 0$ for all positive $V$ in $\mathcal{N}$, the mass of an asymptotically hyperbolic manifold $(M,g)$ is defined as follows (see \cite{D-G-S} for details).

\begin{definition}[\cite{D-G-S}]\label{AH-mass}
The mass of an asymptotically hyperbolic manifold $(M,g)$ is defined as
\be
\m = \dfrac{1}{2(n-1)\omega_{n-1}} \inf_{V \in \mathcal{N}^1} H_{\Phi}(V),
\ee
where $\mathcal{N}^1$ is the set of positive functions in $\mathcal{N}$ with $\eta(V,V)=1$. Moreover, one can replace $\Phi$ by $A \circ \Phi$ for a suitable isometry $A$, so that
\be
\m =  \dfrac{1}{2(n-1)\omega_{n-1}} H_{\Phi}( V_{(0)}).
\ee
Coordinates with this property are called balanced.
\end{definition}

We now turn our attention to graphs over $\Hb^n$ as submanifolds of $\Hb^{n+1} = (\R \times \Hb^n, \bar b)$. To describe the model space of $\Hb^{n+1}$, in polar coordinates on $\R^n$ as above, we fix the function $V:=V(r) = \cosh r$ and let
\be
\bar b = V(r)^2 ds \otimes ds + b.
\ee

Let $\Omega$ be a bounded open subset of $\Hb^n$ and $f: \Hb^n \setminus \Omega \longrightarrow \R$ be a continuous function, which is smooth on $\Hb^n \setminus \ovr{\Omega}$. Let $\Phi$ be the chart of the graph
\be
\graph[f] = \{  (s,x) \in \R \times \Hb^n  \ | \ f(x) = s  \},
\ee
defined as the inverse of the usual parametrization of the graph, $\Psi : \Hb^n \setminus \Omega \to \graph[f]$, given by $\Psi(x) = (f(x),x)$. We endow $\graph[f]$ with the metric induced by $\Hb^{n+1}$. Following \cite{D-G-S} and \cite{H-L}, we make the following definition.

\begin{definition}[Asymptotically hyperbolic graph] \label{AHgraph-def} 
Let $\Omega \subset \Hb^n$ ($n \geq 3$) be an open and bounded subset, such that its complement is connected. Consider a continuous function $f: \Hb^n \setminus \Omega \longrightarrow \R$, smooth on $\Hb^{n} \setminus \ovr{\Omega}$. Suppose that 
\begin{enumerate}[(i)]
\item $e = V^2 df \otimes df$ satisfies (2) and (3) in Definition \ref{def-AH}, and
\item $V^2 |\grad^b f|_{b}^2 \to 0$ at infinity. 
\end{enumerate}
Then we say that $\graph[f]$ (or simply $f$) is asymptotically hyperbolic with respect to the chart $\Phi$ as above. We say that  $\graph[f]$ (or $f$) is balanced if $\Phi$ is a set of balanced coordinates. In the case when $\Omega = \emptyset$, we say that $f$ is entire.
\end{definition}

We now set $c_n = 2(n-1) \w_{n-1}$, where $\w_{n-1}$ denotes the volume of the standard round metric on $\bS^{n-1}$.

\begin{remark}\label{AHgraph-mass}
By Definition \ref{AH-mass}, if $f$ is asymptotically hyperbolic and balanced, the mass of $f$ is given by $\m(f)=\frac{1}{c_n} H_{\Phi} (\cosh r )$.
\end{remark}

\begin{remark}\label{remark-asymptotics}
Note that by condition (i) in the previous definition, we can check that $\lim_{r \to \infty} f(r) = C$ for some constant $C$, in contrast to the asymptotically flat case in \cite{H-L} where an asymptotically flat graph is allowed to approach $\pm \infty$ as $|x| \to \infty$. In fact, the usual decay conditions to be imposed in the asymptotically flat case in order to ensure that the ADM mass is finite, namely, $|\grad f|=O(|x|^{\frac{q}{2}})$ for some $q > \frac{n-2}{2}$, imply $\lim_{|x| \to \infty} f(x) = C$ for dimensions $n \geq 6$ (see \cite{H-W-RPI}).
\end{remark}

We now combine Definition \ref{AH-functional} and Remark \ref{AHgraph-mass} and give the definition of mass of a balanced asymptotically hyperbolic function.

\begin{definition}[Mass of an asymptotically hyperbolic graph] \label{mass-AHgraph}
If $f$ is an asymptotically hyperbolic and balanced, its mass is given by
\be
\m(f) = \dfrac{1}{c_n} \lm{r}{\infty} \int_{S_r} (V(\dv^b e - d \tr^b e) + (\tr^b e) dV - e(\grad^b V, \cdot))(\nu_r) d\mu^b,
\ee
where $e = V^2 df \otimes df$, $V := V(r) =\cosh r$, $S_r$ is the coordinate sphere of radius $r$ in $\Hb^{n}$, and $\nu_r$ the outward normal vector to $S_r$.
\end{definition}

\begin{remark}
The positive mass theorem for asymptotically hyperbolic manifolds with scalar  curvature greater or equal to $-n(n-1)$ has been proven for spin manifolds \cite{C-H} and for lower dimensions with special asymptotic behavior \cite{A-C-G,C-G-N-P}. Note that by Corollary \ref{kmass-est2} below, all asymptotically flat graphs considered here will have non-negative mass.
\end{remark}

\begin{definition}
We say that an asymptotically hyperbolic function $f:\Hb\setminus \Omega \To \R$ has a minimal boundary if $\pr \Omega \neq \emptyset$ and $f$  is constant on each component of $\pr \Omega$ and $|\df|_b \to \infty$ on $\pr \Omega$.
\end{definition}

\begin{example}[AdS-Schwarzschild graphs]
One of the most important examples of asymptotically hyperbolic manifolds are the AdS-Schwarzschild manifolds. These manifolds represent initial data sets for the Einstein Equations with negative cosmological constant, corresponding to the gravitational field surrounding a non-rotating spherical body or a black hole. For $n \geq 3$ and $m \geq 0$, let $\rho_0=\rho(m)$ be the largest solution of
\bee
1 + \rho^2 - \dfrac{2m}{\rho^{n-2}} = 0.
\eee
The AdS-Schwarzschild manifold of mass $\m = m$ is the manifold $(\rho_0,\infty) \times \bS^2$ with the metric
\bee
g_m = \dfrac{1}{1+ \rho^2 - \frac{2m}{\rho^{n-2}}} d\rho^2 + \rho^2 \s.
\eee
Clearly, if $m=0$, $g_0$ is the metric of $\Hb^n$. In order to realize a mass $m$ AdS-Schwarzschild manifold as an asymptotically hyperbolic graph it is useful to set $\rho = \sinh  r$ so that the metric $b$ on $\Hb^n$ takes the form
\bee
b = \dfrac{d\rho^2}{1+\rho^2}+\rho^2 \s,
\eee
and the metric $\bar b$ on $\Hb^{n+1}$ is then given by $\bar b = V^2 ds^2 + b$,
with $V:=V(\rho)=\sqrt{1+\rho^2}$ . Therefore, the \emph{AdS-Schwarzschild graph of mass $m$} is given by
\bee
f_{m}(\rho) = \dint_{\rho_0}^{\rho} \dfrac{1}{\sqrt{1+s^2}}\sqrt{\dfrac{1}{1+s^2-\frac{2m}{s^{n-2}}} - \dfrac{1}{1+s^2}}\,ds.
\eee
Note that $\grad f_{m}(\rho) \to \infty$ when $\rho \to \rho_0$, that is, $f_{m}$ has a minimal boundary.
\end{example}

Dahl, Gicquaud and Sakovich obtained Penrose type inequalities for asymptotically hyperbolic graphs with minimal boundary in \cite{D-G-S}, including the following one.
\begin{thm}[\cite{D-G-S}]
Let $f:\Hb^n \setminus \Omega \To \R$ be a balanced asymptotically hyperbolic graph with minimal boundary and scalar curvature $R(f) \geq -n(n-1)$ in $\Hb^{n+1}$. Suppose that $\pr \Omega$ is mean-convex ($H \geq 0$) and that contains an inner ball centered at the origin of radius $r_0$. Then,
\be\label{RPI-AH}
\m(f) \geq \dfrac{1}{2 \w_{n-1}} V(r_0) |\pr \Omega|_b,
\ee
where $V=V(r)=\cosh(r)$.
\end{thm}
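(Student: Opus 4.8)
The plan is to follow Lam's scheme \cite{Lam}, carried out for asymptotically hyperbolic graphs in \cite{D-G-S}: express $\m(f)$ as a bulk integral of scalar curvature plus a contribution from the minimal boundary, drop the bulk term using $R(f)\ge -n(n-1)$, and bound the boundary term by a Minkowski-type inequality in $\Hb^n$ together with the inner-ball hypothesis.

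First I would set $e=V^2\,df\otimes df$ with $V=\cosh r$, and let $U$ be the $1$-form appearing in the integrand of Definition~\ref{mass-AHgraph}. A direct computation using the Gauss equation for $\graph[f]\subset\Hb^{n+1}$ and the fact that $\hess^b V=Vb$ should show that $\dv^b(U^\sharp)$ is a pointwise \emph{non-negative} multiple of $R(f)+n(n-1)$, say $\dv^b(U^\sharp)=w\,(R(f)+n(n-1))$ with $w\ge 0$ and $w=O(\cosh r)$, so that by condition~(3) of Definition~\ref{def-AH} the bulk integrand is absolutely integrable on $\Hb^n\setminus\bO$. Applying the divergence theorem on the geodesic annulus $B_r\setminus\bO$ (with $\pr B_r=S_r$), letting $r\to\infty$ (the flux limit exists by Definitions~\ref{def-AH} and~\ref{AHgraph-def}), and dividing by $c_n=2(n-1)\w_{n-1}$, I would obtain the mass formula of \cite{D-G-S},
\be
\m(f) = \frac{1}{c_n}\int_{\Hb^n\setminus\bO} w\,\big(R(f)+n(n-1)\big)\,d\mu^b + \frac{1}{c_n}\int_{\pr\Omega} U(\nu)\,d\mu^b,
\ee
with $\nu$ the unit normal of $\pr\Omega$ pointing towards infinity. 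Along $\pr\Omega$ the function $f$ is constant on each component, so $df$ is parallel to $\nu^\flat$ there while $|\df|_b\to\infty$; substituting $e=V^2\,df\otimes df$ into the expression for $U$ and passing to the limit, a cancellation of the divergent terms should leave $U(\nu)=VH$ on $\pr\Omega$, where $H$ is the mean curvature of $\pr\Omega$ in $(\Hb^n,b)$ with respect to $\nu$ (the familiar mechanism by which a minimal boundary contributes its mean curvature, cf.~\cite{Lam}; here $H\ge 0$ since $\pr\Omega$ is mean-convex). As $w\ge 0$ and $R(f)\ge -n(n-1)$, the bulk integral is non-negative, hence $\m(f)\ge \frac{1}{c_n}\int_{\pr\Omega} V\,H\,d\mu^b$.

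To conclude, I would invoke the Minkowski formula in $(\Hb^n,b)$ for the conformal vector field $\sinh r\,\pr_r$, which satisfies $\grad^b(\sinh r\,\pr_r)=\cosh r\cdot\mathrm{Id}$, so that $V=\cosh r$ is its conformal factor: for the closed hypersurface $\pr\Omega$,
\be
(n-1)\int_{\pr\Omega} V\,d\mu^b = \int_{\pr\Omega} H\,\sinh r\,\la\pr_r,\nu\ra_b\,d\mu^b .
\ee
Since $\la\pr_r,\nu\ra_b\le 1$, $\sinh r\le\cosh r=V$ and $H\ge 0$, the right-hand side is at most $\int_{\pr\Omega} V\,H\,d\mu^b$; this is the weak form of the Minkowski-type inequality of Montiel and Ros \cite{M-R} needed here, and it uses neither star-shapedness nor the outer-minimizing property. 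Finally, $B_{r_0}\subset\Omega$ forces $r\ge r_0$ on $\pr\Omega$, so $V=\cosh r\ge\cosh r_0=V(r_0)$ there. Chaining the estimates,
\be
\m(f)\ge \frac{1}{c_n}\int_{\pr\Omega} V\,H\,d\mu^b \ge \frac{n-1}{c_n}\int_{\pr\Omega} V\,d\mu^b \ge \frac{n-1}{c_n}\,V(r_0)\,|\pr\Omega|_b = \frac{1}{2\w_{n-1}}\,V(r_0)\,|\pr\Omega|_b,
\ee
which is \eqref{RPI-AH}.

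The hardest parts are the two computations I have only asserted. The first is the divergence identity $\dv^b(U^\sharp)=w\,(R(f)+n(n-1))$ with $w\ge 0$: this is where the warped-product structure $\bar b=V^2\,ds^2+b$ and the special choice $V=\cosh r$ (so that $V\in\N$) enter in an essential way, and where one must verify that $w$ has the correct sign and growth. The second is the limiting analysis at $\pr\Omega$, where $|\df|_b\to\infty$ and $f$ is only continuous up to the boundary, in which one must show $U(\nu)\to VH$; this is the analogue of the horizon computation in \cite{Lam} and is carried out in detail in \cite{D-G-S}. Granting these, the divergence theorem, the Minkowski formula, and the inner-ball comparison are routine.
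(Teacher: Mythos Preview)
The paper does not give its own proof of this theorem; it is quoted from \cite{D-G-S} and only the statement is recorded. That said, the ingredients you use are exactly those the paper itself records and uses elsewhere: the divergence identity \eqref{VR-div}, the boundary computation in the proof of Lemma~\ref{k-mass}, and the Minkowski-type inequality of Montiel--Ros cited in Lemma~\ref{vol-k-estimate}. Your overall strategy (Lam's scheme: divergence identity $\Rightarrow$ mass formula with boundary term $\Rightarrow$ drop the bulk $\Rightarrow$ Minkowski $+$ inner-ball comparison) is correct and is precisely the argument of \cite{D-G-S}.

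One technical correction. You write $\dv^b(U^\sharp)=w\,(R(f)+n(n-1))$ with $U$ the mass integrand of Definition~\ref{mass-AHgraph}. This is not quite the identity; what actually holds (see \eqref{VR-div} at $\k=1$) is
\[
\dv^b\!\left[\frac{U}{1+V^2|\df|_b^2}\right]=V\,(R(f)+n(n-1)).
\]
The factor $(1+V^2|\df|_b^2)^{-1}$ sits \emph{inside} the divergence, not as an external weight. This does not damage your argument, but it changes where the bookkeeping happens: at $S_r$ one uses condition (ii) of Definition~\ref{AHgraph-def}, $V^2|\df|_b^2\to 0$, so the outer flux of $U/(1+V^2|\df|_b^2)$ coincides with that of $U$ and equals $c_n\m(f)$; at $\pr\Omega$ the computation in the proof of Lemma~\ref{k-mass} gives the boundary integrand $VH\cdot\frac{V^2|\df|_b^2}{1+V^2|\df|_b^2}$, which tends to $VH$ as $|\df|_b\to\infty$. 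With this adjustment your ``$U(\nu)\to VH$'' claim is exactly right, and the rest of your chain
\[
\m(f)\ge \frac{1}{c_n}\int_{\pr\Omega} VH\,d\mu^b \ge \frac{n-1}{c_n}\int_{\pr\Omega} V\,d\mu^b \ge \frac{n-1}{c_n}V(r_0)\,|\pr\Omega|_b=\frac{1}{2\w_{n-1}}V(r_0)\,|\pr\Omega|_b
\]
is clean; the Minkowski identity you wrote for the conformal field $\sinh r\,\pr_r$ and the pointwise bound $\sinh r\,\la\pr_r,\nu\ra_b\le V$ (valid regardless of the sign of $\la\pr_r,\nu\ra_b$, since $H\ge 0$) are correct.
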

This lower bound will suffice for our purposes, although it is not optimal. We remark that in \cite{GWW}, Ge, Wang and Wu defined a higher order mass invariant for asymptotically hyperbolic graphs and obtained corresponding Penrose type inequalities.

Now, we would like to impose some conditions on the ``shape'' of the graph. In $(\Hb^{n+1},\bar b)$, $n_0 := (1,\vec{0})$ is a natural vector to consider in the normal space to $T_p \Hb^n \subset T_p\Hb^{n+1}$, for $p \in \Hb^{n+1}$. We denote the mean curvature and the mean curvature vector of $\graph[f]$ in $\Hb^{n+1}$ by $\ovr{H}$ and $\ovr{\bH}$, respectively. The convention we use for the mean curvature is that the unit sphere in $\R^n$ has positive mean curvature equal to $n-1$ with respect to the inward pointing normal vector.

\begin{definition}
We say that the mean curvature vector of $\graph[f]$ in $\Hb^{n+1}$, $\ovr{\bH}$, is \emph{pointing upward} if at every point on $\graph[f]$, either $\bar{b}(\ovr{\bH},n_0) > 0$ or $\ovr{{\bH}}=0$. 
\end{definition}

If $h$ is a regular value of a smooth function $f:\Hb^{n}\setminus \Omega \To \R$, we denote the mean curvature and the mean curvature vector of the level set $\S_h$ in $\Hb^n$ by $H_{\S_h}$ and $\bH_{\S_h}$, respectively.  The following result \cite[Proposition 5.4]{D-G-S}, relates $\bH$ and $\bH_{\S_h}$, when $h$ is a regular value of $f$.

\begin{prop}[\cite{D-G-S}]\label{mean-curv-comparison}
Let $f:\Hb^{n}\setminus \Omega \To \R$ be a smooth function, and let $s_0$ be given. Assume that $s_0$ is a regular value for $f$, and let $\S:=\graph[f] \cap \{ s=s_0 \}$. Let $\nu$ be a unit normal vector field to $\graph[f]$ in $\Hb^{n+1}$, $\eta$ be a normal unit vector field to $\S$ in $\{ s_0 \} \times \Hb^n$, and $H_{s_0}$ denote the mean curvature of $\S$ in $\{ s_0 \} \times \Hb^n$, with respect to $\eta$. Then
\bee
\bar b (\nu,\eta) \ovr{H}H_{s_0} \geq \dfrac{R(f) + n(n-1)}{2} + \dfrac{n}{2(n-1)}\bar b(\nu,\eta)^2H_{s_0}^2.
\eee
\end{prop}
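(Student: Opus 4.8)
The plan is to deduce the inequality from the Gauss equation for $\graph[f]\subset\Hb^{n+1}$ together with a computation of the second fundamental form of $\graph[f]$ in a frame adapted to the level set $\S$. Fix $p\in\S$ and work at $\Psi(p)\in\graph[f]$; since $s_0$ is a regular value, $\df(p)\neq 0$, so I would choose a $b$-orthonormal basis $e_1,\dots,e_n$ of $T_p\Hb^n$ with $e_1,\dots,e_{n-1}$ tangent to $\S$ and $e_n=\df/|\df|_b$ (so $e_n=\pm\eta$). Then $e_if=0$ for $i<n$, hence $\Psi_*e_i=e_i$ for $i<n$ and $\Psi_*e_n=e_n+|\df|_b\,\pr_s$; a direct computation with $\bar b=V^2ds^2+b$ shows that $\{e_1,\dots,e_{n-1},\hat E_n\}$, with $\hat E_n:=W^{-1}\Psi_*e_n$ and $W:=\sqrt{1+V^2|\df|_b^2}$, is a $\bar b$-orthonormal basis of $T_{\Psi(p)}\graph[f]$.

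Next I would use that the slice $\{s=s_0\}\times\Hb^n$ is totally geodesic in $\Hb^{n+1}$ — it is the base of the warped product $\bar b=b+V^2ds^2$. Writing $\ovr A$ for the second fundamental form of $\graph[f]$ with respect to $\nu$ and applying the Gauss formula for $\S$ inside the slice, one has $\ovr{\grad}_XY=\grad^\S_XY+A^\S(X,Y)\,\eta$ for $X,Y\in T_p\S$, where $A^\S$ is the second fundamental form of $\S$ in the slice with respect to $\eta$ (so $\tr A^\S=H_{s_0}$). Since $\grad^\S_XY$ is tangent to $\graph[f]$, this yields $\ovr A(X,Y)=\bar b(\nu,\ovr{\grad}_XY)=\bar b(\nu,\eta)\,A^\S(X,Y)$ for all $X,Y\in T_p\S$. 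Abbreviating $\alpha:=\bar b(\nu,\eta)$, $a_i:=\ovr A(e_i,\hat E_n)$ and $c:=\ovr A(\hat E_n,\hat E_n)$, it follows that in the basis above $\ovr H=\alpha H_{s_0}+c$ and $|\ovr A|^2=\alpha^2|A^\S|^2+2\sum_{i<n}a_i^2+c^2$.

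Finally, because $\Hb^{n+1}$ has constant sectional curvature $-1$, the Gauss equation for the hypersurface $\graph[f]$ reduces to $R(f)+n(n-1)=\ovr H^2-|\ovr A|^2$ (with the paper's sign convention for the mean curvature). Substituting the expressions above and simplifying, the asserted inequality becomes equivalent to the identity
\[
\alpha\,\ovr H\,H_{s_0}-\frac{R(f)+n(n-1)}{2}-\frac{n}{2(n-1)}\alpha^2H_{s_0}^2 \;=\; \sum_{i<n}a_i^2 \;+\; \frac{\alpha^2}{2}\Big(|A^\S|^2-\frac{H_{s_0}^2}{n-1}\Big),
\]
whose right-hand side is non-negative: the first term trivially, and the second since $|A^\S|^2\geq(\tr A^\S)^2/(n-1)=H_{s_0}^2/(n-1)$ by the Cauchy--Schwarz inequality, $\S$ being $(n-1)$-dimensional.

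I expect the only genuinely delicate step to be the second one — correctly identifying the $(n-1)\times(n-1)$ tangential block of $\ovr A$ with $\bar b(\nu,\eta)A^\S$ while keeping careful track of the sign conventions for the various normals — whereas the rest is the Gauss equation and elementary algebra. It is precisely the discarded off-diagonal curvatures $a_i$ and the Cauchy--Schwarz defect of $\S$ that promote the identity to the inequality of the Proposition, which is the analogue, valid in every dimension $n\geq 3$, of Lam's computation underlying the mass formula of \cite{D-G-S}.
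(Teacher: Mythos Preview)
Your argument is correct. The paper does not supply its own proof of this proposition; it is quoted verbatim as \cite[Proposition~5.4]{D-G-S}, so there is nothing in the present paper to compare against. Your approach---Gauss equation for $\graph[f]\subset\Hb^{n+1}$, the observation that the slice $\{s=s_0\}\times\Hb^n$ is totally geodesic (it is the base of the warped product $\bar b=b+V^2ds^2$), the block decomposition $\ovr A|_{T\S\times T\S}=\bar b(\nu,\eta)\,A^\S$, and then Cauchy--Schwarz on the traceless part of $A^\S$---is exactly the computation carried out in \cite{D-G-S}, and your displayed identity with remainder $\sum_i a_i^2+\tfrac{\alpha^2}{2}\big(|A^\S|^2-\tfrac{H_{s_0}^2}{n-1}\big)$ checks out line by line.
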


The following lemma follows from Proposition \ref{mean-curv-comparison}, and corresponds to Corollary 2.11 in \cite{H-L}.

\begin{lemma} \label{level-set-mean-convex}
Let $f:\Hb^n \setminus \bO \longrightarrow \R$ be a balanced asymptotically hyperbolic graph, with scalar curvature satisfying $R(f) \geq -n(n-1)$ and upward pointing mean curvature vector field. Let $\S_h$ be a level set of a regular value $h$, then
\be
\bH_{\S_h} = - H_{\S_h}\dfrac{\grad^b f}{|\grad^b f|_b} \quad \text{and} \quad
H_{\S_h} \geq 0.
\ee
\end{lemma}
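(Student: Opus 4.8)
The plan is to extract both conclusions from Proposition \ref{mean-curv-comparison} by making judicious choices of the unit normals $\nu$ and $\eta$. First I would fix a regular value $h$ of $f$ and work on the level set $\S_h = \graph[f] \cap \{s = h\}$. On $\Hb^n$ the natural unit normal to $\S_h$ is $\eta_0 := \grad^b f / |\grad^b f|_b$; with this choice the mean curvature vector satisfies $\bH_{\S_h} = -H_{\S_h}\, \eta_0$ for the scalar mean curvature $H_{\S_h}$ taken with respect to $\eta_0$ (this is just the defining relation $\bH_{\S_h} = -H_{\S_h}\eta$, with the sign dictated by the stated convention that the round sphere in $\R^n$ has positive mean curvature with respect to the \emph{inward} normal — and $\grad^b f$ points toward increasing $f$, i.e. ``outward'' relative to the sublevel region, so the sign works out to the stated formula). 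This gives the first displayed identity essentially by definition, so the real content is the inequality $H_{\S_h} \geq 0$.

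For the second part I would apply Proposition \ref{mean-curv-comparison} with $\S = \S_h$, $s_0 = h$, $\eta = \eta_0$ as above, and $\nu$ the upward-pointing unit normal to $\graph[f]$, i.e. the one with $\bar b(\nu, n_0) > 0$. The hypothesis that $\ovr{\bH}$ points upward means $\bar b(\ovr{\bH}, n_0) > 0$ (or $\ovr{\bH} = 0$), which translates into $\ovr{H} \geq 0$ when $\ovr{H}$ is computed with respect to this $\nu$ (since $\ovr{\bH} = -\ovr{H}\nu$ for the induced convention, one has to double-check the sign, but the upshot is $\ovr{H} \geq 0$). The inequality of Proposition \ref{mean-curv-comparison} then reads
\bee
\bar b(\nu,\eta_0)\, \ovr{H}\, H_h \;\geq\; \frac{R(f) + n(n-1)}{2} + \frac{n}{2(n-1)}\,\bar b(\nu,\eta_0)^2 H_h^2 \;\geq\; 0,
\eee
using $R(f) \geq -n(n-1)$ and that the second term is manifestly non-negative. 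Now I would argue by contradiction: suppose $H_h < 0$ at some point of $\S_h$. Since $\ovr{H} \geq 0$ and $\bar b(\nu,\eta_0)^2 H_h^2 > 0$, the right-hand side is strictly positive there, forcing $\bar b(\nu,\eta_0)\, \ovr{H} > 0$; in particular $\bar b(\nu, \eta_0) \neq 0$ and has the same sign as... — here one needs the relation between $\bar b(\nu,\eta_0)$ and the geometry, namely that $\nu$ (upward) projects onto $\Hb^n$ in the direction of $-\grad^b f$ up to a positive factor (the graph tilts ``down the gradient''), so $\bar b(\nu, \eta_0) \leq 0$. Combined with $\ovr{H} \geq 0$ this gives $\bar b(\nu,\eta_0)\ovr{H} \leq 0$, contradicting strict positivity. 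Hence $H_h \geq 0$ everywhere on $\S_h$.

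The main obstacle I expect is bookkeeping of signs and orientations: reconciling the mean-curvature convention stated in the paper (round sphere positive with respect to the \emph{inward} normal) with the choice of $\nu$ that makes ``pointing upward'' equivalent to $\ovr{H} \geq 0$, and simultaneously pinning down the sign of $\bar b(\nu, \eta_0)$ from the fact that $\nu$ is the upward graph normal while $\eta_0 = \grad^b f/|\grad^b f|_b$. Concretely, one should compute $\nu$ explicitly in terms of $\grad^b f$ and $n_0$ — something like $\nu$ proportional to $n_0 - V^{-2}\,\grad^b f$ (up to normalization by $\sqrt{1 + V^{-2}|\grad^b f|_b^2}$ or its analogue) — and read off $\bar b(\nu, \eta_0) = -(\text{positive})\,|\grad^b f|_b \leq 0$ and $\bar b(\nu, n_0) = (\text{positive}) > 0$ directly. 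Once these two sign facts are nailed down, the contradiction argument above closes immediately, and the identity $\bH_{\S_h} = -H_{\S_h}\grad^b f/|\grad^b f|_b$ is just the definition of scalar mean curvature with the chosen normal. This mirrors exactly the proof of \cite[Corollary 2.11]{H-L}, with the hyperbolic warping factor $V$ playing a purely cosmetic role in the normalizations.
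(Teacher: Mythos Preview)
Your approach is the same as the paper's --- both extract the result from Proposition~\ref{mean-curv-comparison} --- but the sign bookkeeping you flagged as the main obstacle does in fact bite, and your contradiction argument as written does not close.

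Two specific issues. First, the paper's convention (sphere positive with respect to the \emph{inward} normal) gives $\bH = H\nu$, not $\bH = -H\nu$. With your outward choice $\eta_0 = \grad^b f/|\grad^b f|_b$, the scalar $H_{s_0}$ appearing in Proposition~\ref{mean-curv-comparison} is the mean curvature with respect to $\eta_0$, which is $-H_{\S_h}$ in the statement's normalization, not $+H_{\S_h}$. Second, even granting your intended setup, the step ``forcing $\bar b(\nu,\eta_0)\ovr{H} > 0$'' is wrong: if $H_h < 0$ and the left side $\bar b(\nu,\eta_0)\ovr{H}H_h$ is strictly positive, dividing by $H_h$ flips the inequality to $\bar b(\nu,\eta_0)\ovr{H} < 0$, which is perfectly consistent with $\bar b(\nu,\eta_0) \leq 0$ and $\ovr{H} \geq 0$. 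No contradiction arises.

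The paper sidesteps all of this by never choosing orientations. It observes that the left side of Proposition~\ref{mean-curv-comparison} is the invariant quantity $\bar b(\ovr{\bH},\bH_{\S_h})$, which is therefore $\geq 0$. It then writes both mean curvature vectors explicitly,
\[
\ovr{\bH} = \ovr{H}\,\dfrac{(V^{-2},-\grad^b f)}{\sqrt{V^{-2}+|\grad^b f|_b^2}}\quad\text{with }\ovr{H}\geq 0,\qquad
\bH_{\S_h} = H_{\S_h}\,\dfrac{(0,-\grad^b f)}{|\grad^b f|_b},
\]
and computes $\bar b(\ovr{\bH},\bH_{\S_h}) = \ovr{H}H_{\S_h}\cdot(\text{positive factor}) \geq 0$, whence $H_{\S_h}\geq 0$. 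The first displayed identity in the lemma is then just the second formula above. This is exactly your intended argument, but with the explicit vector computation replacing the contradiction step, so no orientation choices ever need to be reconciled.
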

\begin{proof}
Using Proposition \ref{mean-curv-comparison} with $\S=\{ h \} \times \S_h$, it follows that $\bar b(\overline{\bH} , \bH_{\S_h}) \geq 0$, where $\overline{\bH}$ is the mean curvature vector of $\graph[f]$ in $\Hb^{n+1}$ and $\bH_{\S_h}$ the mean curvature vector in $\Hb^n$ (here we are abusing notation by using the same name for the mean curvature vector of $\S_h$ and $\{ h \} \times \S_h$ in $\Hb^n$ and $\{ h \} \times \Hb^n$, respectively). The condition that the mean curvature vector is pointing upward implies that
\bee
\overline{\bH} =\overline{H}\dfrac{(V^{-2},-\grad^b f)}{\sqrt{V^{-2}+|\grad^b f|_b^2}}, \ \text{with} \ \overline{H} \geq 0.
\eee
On the other hand, we have
\bee
\bH_{\S_h} = H_{\S_h}\dfrac{(0,-\grad^b f)}{|\grad^b f|_b}.
\eee
It follows that
\begin{align*}
0 \leq b(\overline{\bH} , \bH_{\S_h}) = \overline{H}H_{\S_h}\dfrac{|\grad^b f|_b}{\sqrt{V^{-2}+|\grad^b f|_b^2}},
\end{align*}
and hence, $H_{\S_h} \geq 0$.
\end{proof}

Let $f:\Hb^{n} \setminus \Omega \To \R$ be an asymptotically hyperbolic function. We will be interested in estimating the volume of the level sets, which might not be smooth. Recall that if $f$ has a minimal boundary, then $f$ is locally constant on $\pr \Omega$ and hence we can define a function $\bar f$ with domain $\Hb^n$ (and coinciding with $f$ on $\Hb^n \setminus \Omega$), by extending $f$ to be constant on each connected component of $\ovr{\Omega}$.  If $f$ is entire, then $\bar f=f$.  In addition, for each $h \in \R$ define the set $\Omega_h := \{ x \in \Hb^n \, : \, \bar f(x) < h \}$ and $\S_h = \pr^* \Omega_h$, which denotes the reduced boundary of $\Omega_h$, see \cite{Simon} and \cite{E-G} for relevant definitions. Notice that by Sard's theorem, the set of critical points of $f$ has measure zero. On the other hand, for each regular value $h$, $\S_h$ is a smooth level set. We define the volume along the level sets as
\bee
\V(h) = |\S_h|_b = |\pr^* \Omega_h|_b,
\eee
where $|\cdot|_b$ denotes the $(n-1)$-dimensional Hausdorff measure in  $\Hb^n$. From the lower semicontinuity of perimeter, it follows that the function $\V$ is lower semicontinuous.

Moreover if $E \subset \Hb^n$ is bounded and has finite perimeter, we say that $\pr^*E$ is outer-minimizing if $|\pr^*E|_b \leq |\pr^* F|_b$ for any bounded set $F \subset \Hb^n$ that contains $E$.

In the following lemma we rule out the existence of a maximum in the interior or in the boundary of the domain of an asymptotically hyperbolic function. The proof follows identically as in \cite[Lemma 3.3]{H-L} and it is included here for the reader's convenience.

\begin{lemma} \label{lemma-height}
Let $f:\Hb^n \setminus \Omega \To \R$ be a non-constant smooth asymptotically hyperbolic function, either entire or with minimal boundary, with upward pointing mean curvature vector field. Let $h_{\max} = \lim_{r \to \infty} f(r,\theta)$, which is a real number. Then $f < h_{\max}$ everywhere and $\V(h)$ is finite for all $h < h_{\max}$. If in addition, $\S_h$ is outer-minimizing for almost every $h$, then $\V$ is non-decreasing on $(-\infty,h_{\max})$. 
\end{lemma}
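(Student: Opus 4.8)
The plan is to prove the three assertions in order. To begin, $h_{\max}$ is a well-defined real number: condition (i) of Definition \ref{AHgraph-def} forces $\lim_{r\to\infty}f(r,\theta)$ to exist and to be independent of $\theta$, as noted in Remark \ref{remark-asymptotics}; in particular $\sup f\geq h_{\max}$. A maximizing sequence either stays in a bounded region --- in which case, by continuity, $\sup f$ is attained at a point of $\Hb^n\setminus\Omega$ --- or escapes to infinity, forcing $\sup f=h_{\max}$. Hence to prove $f<h_{\max}$ everywhere it is enough to show $\sup f$ is never attained, and I would argue this by contradiction.

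Suppose $\sup f$ were attained at a point $p$. If $p\in\Hb^n\setminus\ovr\Omega$, then $\grad^b f(p)=0$, so $\graph[f]$ is tangent at $(\,f(p),p\,)$ to the horizontal slice $\Pi=\{s=f(p)\}$ and lies (globally) on the $\{s\leq f(p)\}$ side of it. Since $\bar b=V^2ds^2+b$ does not depend on $s$, each slice $\{s=\mathrm{const}\}$ is totally geodesic in $\Hb^{n+1}$, so $\Pi$ has vanishing mean curvature; writing the mean curvature of a graph with respect to the upward unit normal as a quasilinear elliptic operator $\mathcal F$ applied to the graphing function, this says $\mathcal F(f(p))=0$ for the constant function $f(p)$. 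On the other hand, by (the proof of) Lemma \ref{level-set-mean-convex}, the hypothesis that $\ovr{\bH}$ points upward is precisely the statement $\ovr H=\mathcal F(f)\geq 0$ with respect to the upward normal. As $f\leq f(p)$ with equality at $p$ and $\mathcal F(f)\geq 0=\mathcal F(f(p))$, the strong maximum principle for $\mathcal F$ --- equivalently, the geometric strong maximum principle applied to $\graph[f]$ and $\Pi$ --- forces $f\equiv f(p)$ near $p$; since the set where $f$ attains its supremum is then open (by the preceding) and closed (by continuity), $f$ is constant, contrary to hypothesis. If instead $\graph[f]$ has a minimal boundary and $p\in\pr\Omega$, one argues exactly as in \cite[Lemma 3.3]{H-L}: the condition $|\grad^b f|_b\to\infty$ on $\pr\Omega$ makes $\graph[f]$ asymptotically vertical there, which rules out $\sup f$ being attained on $\pr\Omega$. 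Therefore $f<h_{\max}$ everywhere.

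For the finiteness of $\V$, fix $h<h_{\max}$. Because $f\to h_{\max}$ at infinity there is $R_0$ with $f>h$ on $\{r\geq R_0\}$, so $\Omega_h=\{\bar f<h\}$ lies in a bounded region of $\Hb^n$ (together with the bounded set $\ovr\Omega$). When $h$ is a regular value, $\S_h$ is then a smooth compact hypersurface and $\V(h)=|\S_h|_b<\infty$; for a general $h<h_{\max}$ the finiteness follows as in \cite[Lemma 3.3]{H-L} by a limiting argument, using the lower semicontinuity of $\V$ together with uniform perimeter control on the nested sublevel sets $\Omega_{h'}$, $h'<h$.

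Finally, assume $\S_h$ is outer-minimizing for a.e. $h$. If $h_1<h_2<h_{\max}$ then $\Omega_{h_1}\subset\Omega_{h_2}$, and both sets are bounded; hence, whenever $\S_{h_1}=\pr^*\Omega_{h_1}$ is outer-minimizing, the definition of outer-minimizing (with competitor $\Omega_{h_2}$) gives $\V(h_1)=|\pr^*\Omega_{h_1}|_b\leq|\pr^*\Omega_{h_2}|_b=\V(h_2)$. Thus $\V$ is non-decreasing along the full-measure set of $h$ for which $\S_h$ is outer-minimizing, and, being lower semicontinuous, it is non-decreasing on all of $(-\infty,h_{\max})$. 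I expect the first assertion to be the main obstacle: it requires knowing that the graph mean-curvature operator is genuinely elliptic and that $\graph[f]$ is regular enough for the strong maximum principle at an interior contact point, and --- more delicately --- controlling the geometry at a minimal boundary, where the gradient blow-up precludes a direct boundary maximum principle and one must instead exclude by hand the possibility that $\sup f$ is realized on $\pr\Omega$.
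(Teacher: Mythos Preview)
Your proof follows the same route as the paper's: strong maximum principle in the interior, a separate argument at a minimal boundary, boundedness of $\Omega_h$ for finiteness of $\V(h)$, and density of outer-minimizing levels together with lower semicontinuity for monotonicity. The interior step you phrase geometrically (comparison with the totally geodesic slice $\{s=f(p)\}$), whereas the paper writes out the quasilinear elliptic operator for $\ovr H$ explicitly and applies the strong maximum principle to it; these are equivalent.

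One point to sharpen: your stated reason in the boundary case --- ``$|\grad^b f|_b\to\infty$ on $\pr\Omega$ makes $\graph[f]$ asymptotically vertical there, which rules out $\sup f$ being attained on $\pr\Omega$'' --- is not valid on its own. A vertical tangent does not by itself forbid a boundary maximum (think of $f(r)=-\sqrt{r-r_0}$ near $r=r_0$). The actual obstruction, which the paper states in one line, is the upward-pointing hypothesis: if $\sup f$ were realized at $p\in\pr\Omega$, then $f$ decreases as one moves from $\pr\Omega$ into the domain while $|\grad^b f|_b\to\infty$, so near $(f(p),p)$ the graph is a vertical cap bending \emph{downward}, forcing $\bar b(\ovr{\bH},n_0)<0$ there. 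This contradicts $\ovr{\bH}$ pointing upward. You flag the boundary case as ``delicate'' in your last paragraph, but the resolution is exactly this use of the mean curvature hypothesis, not the verticality alone. With that correction your argument matches the paper's.
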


\begin{proof}
From Section 3 in \cite{D-G-S}, given any set of coordinates in $\Hb^n$ (for which we will use latin indices), and since the mean curvature vector of the graph points upward, we have
\bee
\ovr{H} = \dfrac{V^2}{1+V^2|\grad^b f|^2}\( b^{ij} - \dfrac{V^2 f^i f^j}{1+V^2|\grad^b f|^2} \)\left[ \nabla^2_{ij}f + \dfrac{f_i V_j + V_i f_j}{V} + Vb(df,dV)f_if_j \right] \geq 0.
\eee
By the strong maximum principle, a maximum can not be achieved at an interior point unless $f$ is a constant. If $f$ has a minimal boundary, note that if the maximum is achieved on $\pr \Omega$ it will contradict the assumption that the mean curvature vector points upward. Hence 
\bee
f(x) < \sup\limits_{\Hb^n\setminus\Omega} f = \lim_{r \to \infty} f(r,\theta)=h_{\max},
\eee
for all $x \in \Hb^n \setminus \Omega$. It follows that for any $h < h_{max}$, $\Omega_h$ is a bounded subset and thus $\V(h)$ is finite.

Suppose now that $\S_h$ is outer-minimizing for almost every $h$. Let $h_1 < h_2 < h_{\max}$ and $\veps > 0$. Using the left lower semicontinuity of $\V$ and since the outer-minimizing level sets are dense by assumption, we can find $h < h_1 < h_2$ such that
\bee
\V(h_1) \leq \V(h) + \veps \leq \V(h_2) + \veps.
\eee
Since $\veps$ is arbitrary, it follows that $\V$ is non-decreasing.
\end{proof}

\section{Rescaling of asymptotically hyperbolic graphs}\label{section-rescaling}

In order to prove the stability of the positive mass theorem following the procedure in \cite{H-L}, we need to be able to rescale the asymptotically hyperbolic graphs appropriately. We start by defining the hyperbolic space of radius $\k^{-1} > 0$, denoted by $\Hb^n_{\k}$, as $(\R^n,b_{\k})$ with the metric given (in polar coordinates in $\R^n$) by
\be
b_{\k} \defeq dr^2 + \dfrac{\sinh^2(\k r)}{\k^2} \s,
\ee
which clearly has scalar curvature $R(b_{\k}) = -\k^2 n(n-1)$. In a similar fashion as in the definition of $\Hb^{n+1}$, we consider the model of $\Hb^{n+1}_{\k}$ as $(\R \times \Hb^n_{\k},\bar b_{\k})$, with
\be
\bar b_{\k} \defeq \cosh^2( \k r) ds^2 + b_{\k}.
\ee
A straightforward computation gives that the scalar curvature of $\bar b_{\k}$ is given  by $R(\bar b_{\k})= - \k^2 (n+1)n$. Notice that when $\k \to 0$, $\bar b_{\k}$ approaches the Euclidean metric on $\R^{n+1}$ and when $\k = 1$, $\bar b_{\k} = \bar b$.

\begin{definition}[$\k$-mass]
Let $V_{\k} :=V_{\k}(r) = \cosh(\k r)$ and $e_{\k} := V_{\k}^2 df \otimes df$. Suppose that $f:\Hb^n \setminus \Omega \To \R$, where $\Omega$ is open and bounded with connected complement. We define the $\k$-mass of $f$ as the quantity
\be
\m_{\k}(f)\defeq \dfrac{1}{c_n}  \lm{r}{\infty} \int_{S_r} (V_{\k}(\dv^{b_{\k}} e_{\k} - d \tr^{b_{\k}} e_{\k}) + (\tr^{b_{\k}} e_{\k}) dV_{\k} - e_{\k}(\grad^{b_{\k}} V_{\k}, \cdot))(\nu_r) d\mu^{b_{\k}}.
\ee
\end{definition}

\begin{remark}
The name $\k$-mass is motivated by the fact that when $\k=1$ and $f$ is an asymptotically hyperbolic function which is balanced, then $\m_{1}(f)=\m(f)$, that is, it is the mass of an asymptotically hyperbolic graph as in Definition \ref{mass-AHgraph}. For the $\k$-mass to be meaningful as a mass invariant, one would need to modify the definition of the mass of an asymptotically hyperbolic manifold accordingly with an appropriate  scaling of the cosmological constant, and then define the corresponding asymptotically hyperbolic graphs. We do not require such modification for our goals.
\end{remark}

From now on, we set $V_{\k} := V_{\k}(r) = \cosh(\k r)$ and $e_{\k} := V_{\k}^2 df \otimes df$. To evaluate $\m_{\k}$ we note that, from the computations in \cite[Section 3]{D-G-S}, it follows that
\be \label{VR-div}
\mathcal{R}_{f,\k} V_{\k} =  \dv^{b_{\k}} \left[ \dfrac{1}{1 + \Vk^2| \dkf|^2_{\bk}}\(\Vk \dv^{\bk}\ek  - \Vk d\tr^{\bk}\ek - \ek(\grad^{\bk}\Vk,\cdot) + (\tr^{\bk} \ek) d\Vk \) \right],
\ee
where $\mathcal{R}_{f,\k} = R_{\k}(f) + \k^2 n(n-1)$, and $R_{\k}(f)$ denotes the scalar curvature of $\graph[f]$ as a submanifold of $\Hb^{n+1}_{\k}$. The following lemma is similar to Lemma 3.2 in \cite{D-G-S} and gives a way to express the quantity $\m_{\k}$ in terms of a level set $\S_h$. Recall that $\Omega_h = \{ x \in \Hb^n , : \, \bar f < h \}$, where $\bar f$ is the extension of $f$ to $\Hb^n$ obtained by setting $\bar f$ constant on each connected component of $\ovr{\Omega}$ (See Section \ref{mass-AH-graphs}).

\begin{lemma} \label{k-mass}
Let $f$ be an asymptotically hyperbolic graph. If $\S_h$ is the level set of a regular value $h$, then
\be\label{k-mass-expression}
c_n \m_{\k}(f) =\dint_{\Hb^n_{\k} \setminus \Omega_h}  ( R_{\k}(f) + \k^2 n(n-1))\Vk\, d\mu^{\bk}   + \dint_{\S_h} \Vk H_{\k} \dfrac{\Vk^2|\grad^{\bk} f|_{\bk}^2}{1 + \Vk^2|\dkf|^2_{\bk}} \, d\S_h^{\bk},
\ee
with $c_n = 2(n-1) \w_{n-1}$. Here $H_{\k}$ denotes the mean curvature of $\S_h$ with respect to $b_{\k}$.
\end{lemma}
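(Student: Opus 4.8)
The plan is to integrate the divergence identity \eqref{VR-div} over the region $\Hb^n_\k \setminus \Omega_h$ and apply the divergence theorem, treating the boundary contributions at $\S_h$ and at infinity (and on $\pr\Omega$ if there is a minimal boundary) separately. Write $\W_\k := \frac{1}{1 + \Vk^2|\dkf|^2_{\bk}}\bigl(\Vk \dv^{\bk}\ek - \Vk d\tr^{\bk}\ek - \ek(\grad^{\bk}\Vk,\cdot) + (\tr^{\bk}\ek)d\Vk\bigr)$, so that \eqref{VR-div} reads $\mathcal{R}_{f,\k}\Vk = \dv^{\bk}\W_\k$. Integrating over $\{h < \bar f < R\} \cap (\Hb^n_\k \setminus \Omega)$ for large $R$ and letting $R \to \infty$, the outer boundary integral converges to $c_n\m_\k(f)$ by the very definition of the $\k$-mass, since $\W_\k(\nu_r)$ is precisely the integrand in the mass functional up to the factor $\tfrac{1}{1+\Vk^2|\dkf|^2_{\bk}}$, which tends to $1$ at infinity by condition (ii) of Definition \ref{AHgraph-def}; the decay conditions (2)--(3) of Definition \ref{def-AH} (imposed on $e$, hence transferable to $e_\k$) guarantee the limit exists and equals $c_n\m_\k$.

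Next I would identify the inner boundary term on $\S_h$. Since $h$ is a regular value, $\S_h$ is smooth and $\grad^{\bk}f \neq 0$ there, so the outward unit normal of $\Hb^n_\k\setminus\Omega_h$ along $\S_h$ is $\eta = -\grad^{\bk}f/|\grad^{\bk}f|_{\bk}$ (pointing into $\Omega_h$, i.e.\ in the decreasing-$f$ direction). The task is to show that $\W_\k(\eta)$ restricted to $\S_h$ reduces to $\Vk H_\k \frac{\Vk^2|\grad^{\bk}f|^2_{\bk}}{1+\Vk^2|\dkf|^2_{\bk}}$. This is exactly the computation underlying \cite[Lemma 3.2]{D-G-S}, adapted to the rescaled metric $b_\k$: one expands $\dv^{\bk}\ek$, $d\tr^{\bk}\ek$, and $\ek(\grad^{\bk}\Vk,\cdot)$ with $\ek = \Vk^2 df\otimes df$, contracts against $\grad^{\bk}f$, and recognizes the mean curvature $H_\k$ of the level set $\{f = h\}$ appearing via the identity $H_\k = -\dv^{\bk}\!\bigl(\grad^{\bk}f/|\grad^{\bk}f|_{\bk}\bigr)$ together with Hessian terms; the terms involving $\grad^{\bk}\Vk$ and $d\Vk$ combine with the others so that everything collapses to the stated single term. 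If $\Omega \neq \emptyset$, the remaining inner boundary is a level set of $\bar f$ interior to $\{\bar f < h\}$ or else empty — more precisely, for $h$ larger than the boundary values of $f$ the region $\Hb^n_\k\setminus\Omega_h$ has only $\S_h$ and infinity as boundary, while for smaller $h$ one checks $\S_h$ already captures $\pr\Omega$; in either case no extra contribution survives because $\bar f$ is locally constant on $\ovr\Omega$ so $df = 0$ and $\ek = 0$ on the relevant pieces. Combining the three pieces yields \eqref{k-mass-expression}.

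The main obstacle is the boundary computation on $\S_h$: verifying that the full expression $\W_\k(\eta)$ — which is a sum of four terms, each involving two derivatives of $\Vk$ and $f$ — telescopes down to the clean quantity $\Vk H_\k \frac{\Vk^2|\grad^{\bk}f|^2_{\bk}}{1+\Vk^2|\dkf|^2_{\bk}}$. This requires careful bookkeeping of the connection terms for $b_\k$ (which differs from the hyperbolic metric only by the $\k$-scaling of the $\sinh$ factor, so the Christoffel symbols are the natural rescalings), and correctly accounting for the factor $\Vk^{-2}$ appearing in the normal direction of the graph in $\Hb^{n+1}_\k$ versus the level-set normal in $\Hb^n_\k$. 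Since \cite{D-G-S} carries out exactly this in the case $\k = 1$ and the $\k$-dependence enters only through explicit scalings of $\sinh(\k r)$ and $\cosh(\k r)$, the computation goes through verbatim with the obvious substitutions; I would organize it by first recording the pointwise algebraic identity for $\W_\k(\grad^{\bk}f)$ on any regular level set and then dividing by $|\grad^{\bk}f|_{\bk}$. A secondary point requiring a word of justification is the convergence of the outer integral when $h$ is itself allowed to be only a regular value rather than requiring global smoothness — but this is handled exactly as in \cite{D-G-S}, since on $\Hb^n\setminus\ovr\Omega$ the function $f$ is smooth and the decay hypotheses are unchanged.
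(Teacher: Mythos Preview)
Your proposal is correct and follows essentially the same route as the paper: integrate the divergence identity \eqref{VR-div} over $\Hb^n_\k \setminus \Omega_h$, use the divergence theorem, identify the sphere-at-infinity term with $c_n\m_\k(f)$ via the decay $V_\k^2|\grad^{b_\k}f|^2_{b_\k}\to 0$, and reduce the $\S_h$ boundary term to the mean-curvature expression. The only minor organizational difference is in the $\S_h$ computation: the paper first observes that the two $dV_\k$ contributions $\bigl(-\ek(\grad^{\bk}\Vk,\cdot)+(\tr^{\bk}\ek)dV_\k\bigr)(\nu)$ cancel identically, and then extracts $H_\k$ from the remaining two terms via the decomposition $\Lap_{\bk}f=\Lap_{\S_h}f+\hess_{\bk}f(\nu,\nu)+H_\k\,df(\nu)$ together with $f|_{\S_h}=\text{const}$, rather than invoking $H_\k=-\dv^{\bk}\bigl(\grad^{\bk}f/|\grad^{\bk}f|_{\bk}\bigr)$ directly; both give the same answer with the same amount of work.
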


\begin{proof}

Let $\nu_r= \pr_r$ and $\nu = \frac{\grad^{\bk} f}{|\grad^{\bk} f|_{\bk}}$. We assume without loss of generality that $\nu$ is the outward pointing normal vector of $\S_h$ (note that this is the case if $f$ has mean curvature vector pointing upward).

\begin{align*}
&\dint_{\Hb^n_{\k} \setminus \Omega_h}  ( R_{\k}(f) + \k^2 n(n-1)) \Vk \, d\mu^{\bk} \\
=& \lm{r}{\infty} \dint_{B_r(0) \setminus \Omega_h} ( R_{\k}(f) + \k^2 n(n-1))  \Vk  \, d\mu^{\bk} \\
=& \lm{r}{\infty} \dint_{S_r(0)} \dfrac{1}{1 + \Vk^2|\dkf|^2_{\bk}}\(\Vk \dv^{\bk}\ek  - \Vk d\tr^{\bk}\ek - \ek(\grad^{\bk}\Vk,\cdot) + (\tr^{\bk} \ek) d\Vk \)(\nu_r) d\mu^{\bk} \\
&\quad - \dint_{\S_h} \dfrac{1}{1 + \Vk^2|\dkf|^2_{\bk}}\(\Vk \dv^{\bk}\ek  - \Vk d\tr^{\bk}\ek - \ek(\grad^{\bk}\Vk,\cdot) + (\tr^{\bk} \ek) d\Vk \)(\nu) d\S_h^{\bk}  \\
\begin{split}
=&c_n \m_{\k}(f)  \\
&\qquad - \dint_{\S_h} \dfrac{1}{1 + \Vk^2|\dkf|^2_{\bk}}\(\Vk \dv^{\bk}\ek  - \Vk d\tr^{\bk}\ek - \ek(\grad^{\bk}\Vk,\cdot) + (\tr^{\bk} \ek) d\Vk \)(\nu) d\S_h^{\bk},
\end{split}
\end{align*}
where we have used \eqref{VR-div} and the divergence theorem. 

On the other hand, note that 
\begin{align*}
\( - \ek(\grad^{\bk}\Vk,\cdot) + (\tr^{\bk} \ek) d\Vk \)(\nu) &= - \Vk^2 df(\grad^{\bk} \Vk) df(\nu) + \Vk^2 \bk^{ij} f_if_j d\Vk(\nu) \\
&=-\Vk^2f^i (\Vk)_i |\grad^{\bk} f|_{\bk} + \Vk^2 f^i(\Vk)_i |\grad^{\bk} f|_{\bk}  \\
&=0,
\end{align*}
and
\begin{align*}
\Vk \dv^{\bk} \ek (\nu) - \Vk d (\tr^{\bk}\ek) (\nu) &= \Vk^3 \Lap_{\bk} f df(\nu) - \Vk^3 \hess_{\bk} f(\grad^{\bk} f, \nu).
\end{align*}
Using that $f$ is constant on $\S_h$ and the relation
\be
\Lap_{\bk} f = \Lap_{\S_h} f + \hess_{\bk} f (\nu,\nu) + H_{\k} df(\nu),
\ee
we have
\begin{align*}
\Vk \dv^{\bk} \ek (\nu) - \Vk d (\tr^{\bk}\ek) (\nu)  &= \Vk^3 \Lap_{\bk} f |\grad^{\bk} f|_{\bk} - \Vk^3 (\Lap_{\bk} f  -H_{\k} df(\nu))|\grad^{\bk} f|_{\bk} \\
&= \Vk^3 H_{\k} |\grad^{\bk} f|_{\bk}^2.
\end{align*}
Therefore, it follows that
\be
\dint_{\Hb^n_{\k} \setminus \S_h} ( R_{\k}(f) + \k^2 n(n-1))  \Vk \, d\mu^{\bk} = c_n \m_{\k} - \dint_{\S_h} \Vk H_{\k} \dfrac{\Vk^2|\grad^{\bk} f|_{\bk}^2}{1 + \Vk^2|\dkf|^2_{\bk}} \, d\S_h^{\bk}.
\ee
\end{proof}

For the sake of completeness, we now state the positive mass theorem for asymptotically hyperbolic graphs as a direct consequence of the previous theorem and our assumptions.

\begin{coro}[Positive Mass Theorem for Asymptotically Hyperbolic Graphs] \label{kmass-est2}
Suppose that $f$ is an asymptotically hyperbolic graph with $\mathcal{R}_{f,\k} \geq 0$. Let $h$ be a regular value of $f$, then
\be \label{mass-est-level-set}  
 \m_{\k}(f) \geq \dfrac{1}{c_n} \dint_{\S_h} \Vk H_{\k} \dfrac{\Vk^2|\grad^{\bk} f|_{\bk}^2}{1 + \Vk^2|\dkf|^2_{\bk}} \, d\S_h^{\bk}.
\ee
In particular, if $f$ is a balanced asymptotically hyperbolic graph and $\S_h$ is a mean-convex level set, then 
\bee
\m(f)=\m_{1}(f) \geq 0.
\eee
That is, the mass of such a graph is non-negative.
\end{coro}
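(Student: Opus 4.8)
The plan is to read the statement off directly from Lemma \ref{k-mass}; no new argument is needed. Starting from the identity \eqref{k-mass-expression},
\bee
c_n \m_{\k}(f) = \dint_{\Hb^n_{\k} \setminus \Omega_h} \mathcal{R}_{f,\k}\, \Vk\, d\mu^{\bk} + \dint_{\S_h} \Vk H_{\k} \dfrac{\Vk^2|\grad^{\bk} f|_{\bk}^2}{1 + \Vk^2|\dkf|^2_{\bk}}\, d\S_h^{\bk},
\eee
I would note that the integrand of the bulk term is pointwise non-negative: by hypothesis $\mathcal{R}_{f,\k} = R_{\k}(f) + \k^2 n(n-1) \geq 0$, and $\Vk = \cosh(\k r) \geq 1 > 0$. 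Hence the first integral is non-negative, and dropping it from \eqref{k-mass-expression} and dividing by $c_n$ produces exactly \eqref{mass-est-level-set}. I would also record, once and for all, that each of the factors $\Vk$ and $\Vk^2|\dkf|^2_{\bk}(1 + \Vk^2|\dkf|^2_{\bk})^{-1}$ appearing in the boundary integrand is $\geq 0$, so that integrand is non-negative as soon as $H_{\k} \geq 0$.

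For the second assertion I would specialize to $\k = 1$, so that $\Vk = \cosh r$, $\bk = b$, and $e_1 = V^2 df \otimes df = e$; comparing the $\k$-mass with Definition \ref{mass-AHgraph} then gives $\m_1(f) = \m(f)$ whenever $f$ is balanced, while the hypothesis $R(f) \geq -n(n-1)$ is precisely $\mathcal{R}_{f,1} \geq 0$. Thus \eqref{mass-est-level-set} holds with $\k = 1$, and since $\S_h$ is mean-convex its mean curvature $H_1 = H_{\S_h}$ with respect to the outward normal is $\geq 0$; by the previous paragraph the integrand on the right-hand side of \eqref{mass-est-level-set} is then a product of non-negative quantities, so the integral, and hence $\m(f) = \m_1(f)$, is non-negative.

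Because the entire content is already packaged in Lemma \ref{k-mass}, there is no substantive obstacle; the only points meriting care are bookkeeping ones. First, for a regular value $h$ the level set $\S_h$ is a smooth hypersurface coinciding with $\pr^*\Omega_h$, and it is compact because $f$ tends to the constant $\lim_{r\to\infty} f$ at infinity (Remark \ref{remark-asymptotics}), which forces $\Omega_h$ to be bounded when $h$ lies below this constant; should $\S_h$ be empty, the boundary integral vanishes and the conclusion is simply $\m_{\k}(f) \geq 0$. Second, one should make sure the orientation conventions match those used in the proof of Lemma \ref{k-mass}: there $\nu = \grad^{\bk} f/|\grad^{\bk} f|_{\bk}$ is the outward unit normal of $\Omega_h$ — automatic, since $\grad f$ points toward increasing values of $f$ — and $H_{\k}$ is the mean curvature with respect to this $\nu$, which is exactly the convention under which ``mean-convex'' reads $H_{\k} \geq 0$. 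No maximum principle or Minkowski-type inequality is needed here; those are reserved for the volume estimates in the later sections.
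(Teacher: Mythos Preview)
Your proposal is correct and is exactly what the paper intends: the corollary is stated there as ``a direct consequence of the previous [lemma] and our assumptions,'' with no separate proof given, and your argument---drop the non-negative bulk term in \eqref{k-mass-expression}, then specialize to $\k=1$ and use $H_{\S_h}\geq 0$---is precisely that direct consequence. Your bookkeeping remarks on the outward orientation of $\nu=\grad^{\bk} f/|\grad^{\bk} f|_{\bk}$ and the compactness of $\S_h$ are appropriate and consistent with the conventions in the proof of Lemma~\ref{k-mass}.
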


In addition, we obtain the rigidity of the positive mass theorem for the class of graphs that we are considering.
\begin{coro}[Rigidity of the Positive Mass Theorem for Asymptotically Hyperbolic Graphs]
Suppose that $f$ is a balanced asymptotically hyperbolic graph with $R(f) \geq -n(n-1)$ such that the level sets $\S_h$ are mean-convex for almost every $h$, if $f$ is non-constant. Then $\m(f)=0$ if and only if $f$ is constant and hence its graph is isometric to $\Hb^n$.
\end{coro}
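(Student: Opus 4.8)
The plan is to prove the equivalence with the forward implication by contradiction; the reverse one is trivial. The ``if'' direction is immediate: if $f\equiv c$ is constant, then $\graph[f]=\{s=c\}\times\Hb^n$ carries the induced metric $b$, hence is isometric to $(\Hb^n,b)$; moreover $e=V^2\,df\otimes df=0$, so the mass functional \eqref{mass-functional} vanishes identically and $\m(f)=0$ (consistently, $R(f)=-n(n-1)$, so the hypothesis $R(f)\ge -n(n-1)$ is compatible with this case).

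For the converse, suppose $\m(f)=0$ and assume towards a contradiction that $f$ is non-constant, so that by hypothesis $\S_h$ is mean-convex for almost every $h$. Applying Lemma \ref{k-mass} with $\k=1$ to any regular value $h$ gives
\bee
0=c_n\m(f)=\dint_{\Hb^n\setminus\Omega_h}\bigl(R(f)+n(n-1)\bigr)\,V\,d\mu^b\;+\;\dint_{\S_h} V\,H\,\dfrac{V^2|\grad^b f|_b^2}{1+V^2|\grad^b f|_b^2}\,d\S_h^b,
\eee
with $V=\cosh r$. The first integrand is non-negative because $R(f)+n(n-1)\ge 0$ and $V>0$, and the second because $V>0$, $H\ge 0$ by mean-convexity, and the last factor is non-negative. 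Hence both integrals vanish for almost every regular value $h$. Since $V>0$ and $|\grad^b f|_b\neq 0$ on a regular level set $\S_h$, the vanishing of the boundary term forces $H\equiv 0$ on $\S_h$: for almost every regular value $h$, the level set $\S_h$ is a minimal hypersurface of $\Hb^n$.

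Next I would rule out such minimal level sets. By Remark \ref{remark-asymptotics} the number $h_{\max}=\lim_{r\to\infty}f$ is finite, and Lemma \ref{lemma-height} (here I use that the mean curvature vector points upward, which is precisely what makes ``mean-convexity of level sets'' meaningful via Lemma \ref{level-set-mean-convex}) gives $f<h_{\max}$ everywhere, so $\Omega_h$ is bounded for every $h<h_{\max}$. Since $f$ is non-constant, $\Omega_h$ is also nonempty for all $h$ in some nonempty open subinterval of the range of $f$, and for almost every such $h$ the set $\S_h=\pr^*\Omega_h$ is a smooth compact hypersurface without boundary, disjoint from $\bO$, and minimal in $\Hb^n$ by the previous paragraph. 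Fix such an $h$. Recalling that $\cosh r_b\in\mathcal{N}$ satisfies $\hess^b(\cosh r_b)=(\cosh r_b)\,b$, restricting $\cosh r_b$ to $\S_h$ and using $H\equiv 0$ gives
\bee
\Lap_{\S_h}(\cosh r_b)=\Lap_b(\cosh r_b)-\hess^b(\cosh r_b)(\nu,\nu)=(n-1)\cosh r_b>0\quad\text{on }\S_h,
\eee
so integrating over the closed manifold $\S_h$ yields $0=\int_{\S_h}\Lap_{\S_h}(\cosh r_b)\,d\S_h^b>0$, a contradiction. Therefore $f$ is constant, and by the first paragraph $\graph[f]$ is isometric to $\Hb^n$. (Combined with Corollary \ref{kmass-est2}, this also shows $\m(f)>0$ whenever $f$ is non-constant.)

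The step I expect to require the most care is the reduction in the third paragraph: checking that a positive-measure family of heights $h$ simultaneously makes $\Omega_h$ bounded and nonempty, makes $h$ a regular value avoiding the finitely many boundary values of $f$, and lies in the full-measure set where the two integrals vanish, so that both Lemma \ref{k-mass} and the maximum-principle argument apply to the same $\S_h$. Once one is at a compact minimal level set, the contradiction is immediate from the defining relation $\hess^b(\cosh r_b)=(\cosh r_b)b$ for the functions in $\mathcal{N}$ that already drive the DGS mass formula.
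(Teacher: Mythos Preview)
Your argument is correct and follows the same route as the paper: from $\m(f)=0$ you deduce that a.e.\ regular level set is minimal, and then obtain a contradiction from the nonexistence of closed minimal hypersurfaces in $\Hb^n$. The paper simply cites \cite{Myers} for that last fact, whereas you supply the short self-contained proof via $\Lap_{\S_h}(\cosh r_b)=(n-1)\cosh r_b>0$, which is a pleasant use of the very identity $\hess^b V=V b$ underlying the mass functional. Regarding the concern you flag: you do not actually need Lemma~\ref{lemma-height} (and hence no upward-pointing assumption) to get compactness of $\S_h$. By Remark~\ref{remark-asymptotics}, $f\to h_{\max}$ at infinity, so for any regular value $h\neq h_{\max}$ distinct from the finitely many boundary values of $f$, the set $f^{-1}(h)$ lies in a bounded region away from $\pr\Omega$ and is therefore a smooth closed hypersurface; this already suffices for the integration-by-parts contradiction.
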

\begin{proof}
Suppose that $f$ is non-constant. Let $h$ be a real number such that $\S_h$ is a smooth level set. From \eqref{mass-est-level-set}, if $\m(f)=0$ then $H_{\S} \equiv 0$, which is a contradiction since there are no closed minimal hypersurfaces in $\Hb^n$ \cite{Myers}. Thus $f$ has to be constant and hence $\graph[f]$ is isometric to $\Hb^n$. On the other hand, if $f$ is constant, then its graph is isometric to $\Hb^n$ so $R(f)=-n(n-1)$. It follows that $\m(f)=0$ directly from the definition of mass (see \eqref{mass-AHgraph}).
\end{proof}

\section{Volume estimates} \label{volume-estimates}

The main goal of this section is to obtain a differential inequality for the volume function $\V$ of the level sets of an asymptotically hyperbolic function $f$, which ultimately leads to an estimate of $\sup(f)$ in terms of its mass $\m(f)$. We will follow the procedure in \cite{H-L}. Note that the rescaling used in \cite{H-L} to make the mass of a given asymptotically flat function $f$ equal to 1 does not work here. This explains why the rescaled metric $b_{\k}$ and the $\k$-mass were introduced in Section \ref{section-rescaling}.

\begin{lemma} \label{vol-est-alpha}
Suppose that $f$ is a balanced asymptotically hyperbolic function, with $\mathcal{R}_{f,\k} \geq 0$ and upward pointing mean curvature vector field. Let $h$ be a regular value of $f$. Then for any real number $\a > 0$, we have
\begin{align*}
\ddt{}{h} \V_{\k}(h) \geq \dfrac{1}{\a} \left[ \dint_{\S_h} H_{\k} \Vk  \, d\S_h^{\k}  -(1+ \a^{ -2} ) c_n \m_{\k} \right]. 
\end{align*}
\end{lemma}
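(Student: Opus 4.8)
The plan is to use the coarea formula to relate $\frac{d}{dh}\V_\k(h)$ to an integral over $\S_h$, and then combine this with the mass formula from Lemma \ref{k-mass} and a Cauchy–Schwarz–type estimate to produce the stated differential inequality. First I would write, by the coarea formula,
\be
\ddt{}{h}\V_\k(h) = \ddt{}{h}\int_{\Omega_h} \frac{1}{|\grad^{\bk}f|_{\bk}}\,\left(\text{something}\right)\,d\mu^{\bk},
\ee
or more directly $\V_\k(h)=|\S_h|_{\bk}$ and $\frac{d}{dh}|\S_h|_{\bk}$ can be computed via the first variation of area: moving the level set by $dh$ corresponds to a normal displacement of magnitude $|\grad^{\bk}f|_{\bk}^{-1}$, so
\be
\ddt{}{h}\V_\k(h) = \int_{\S_h} \frac{H_{\k}}{|\grad^{\bk}f|_{\bk}}\,d\S_h^{\bk}.
\ee
(One must be slightly careful about which mean curvature convention and which foliation direction is being used, but by Lemma \ref{level-set-mean-convex} the mean curvature vector points along $-\grad^b f/|\grad^b f|_b$ and $H_{\S_h}\geq 0$, so all signs are consistent.)

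Next I would bring in the mass identity. From Lemma \ref{k-mass} (or directly from Corollary \ref{kmass-est2}, using $\Rk\geq 0$),
\be
c_n\m_{\k}(f) \geq \int_{\S_h} \Vk H_{\k}\,\frac{\Vk^2|\grad^{\bk}f|_{\bk}^2}{1+\Vk^2|\dkf|^2_{\bk}}\,d\S_h^{\bk}.
\ee
The key algebraic step is then to estimate the quantity $\int_{\S_h} H_{\k}\Vk\,d\S_h^{\bk}$ appearing in the desired inequality. I would split the integrand pointwise using the elementary inequality $t \leq \tfrac{1}{\alpha}\left(\alpha^2 t\,\tfrac{w}{1+w} + \tfrac{1}{\alpha}\,\tfrac{1}{1+w}\right)$ type bound — more precisely, writing $w := \Vk^2|\dkf|^2_{\bk}\geq 0$, one has the pointwise inequality
\be
1 \;\leq\; \alpha^2\,\frac{w}{1+w} \;+\; \frac{1}{1+w},
\ee
valid because $\alpha^2 w + 1 \geq (1+w)$ would require $\alpha\geq 1$; so instead I expect the right split is $\tfrac{1}{1+w} \leq 1$ together with Young's inequality applied to $\sqrt{w}\cdot\sqrt{H_\k\Vk}$ against $1/\sqrt{1+w}\cdot\sqrt{H_\k\Vk}$, giving
\be
H_{\k}\Vk \;=\; H_{\k}\Vk\,\frac{w}{1+w} + H_{\k}\Vk\,\frac{1}{1+w}
\;\leq\; \alpha^2 H_{\k}\Vk\,\frac{w}{1+w} + \alpha^{-2}\,H_{\k}\Vk\,\frac{w}{1+w} + H_{\k}\Vk\,\frac{1}{1+w},
\ee
and then bounding $H_\k \Vk \frac{1}{1+w}$ — this is the term I need to control. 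Here I would use that $H_\k\Vk\frac{1}{1+w}\leq H_\k\Vk$ is useless, so instead the correct move must be to compare with $\frac{d}{dh}\V_\k$ itself: note $\frac{H_\k}{|\dkf|_{\bk}} = H_\k\Vk \cdot \frac{\Vk^{-1}}{|\dkf|_{\bk}}$, and $\frac{w}{1+w}\cdot\frac{1}{\Vk^2|\dkf|^2_{\bk}} = \frac{1}{1+w}$, so
\be
H_{\k}\Vk\,\frac{1}{1+w} \;=\; \frac{H_{\k}}{\Vk|\dkf|^2_{\bk}}\cdot\frac{w}{1+w} \cdot \Vk^2 \;=\; \Vk\,\frac{H_\k}{|\dkf|_{\bk}}\cdot\frac{\Vk|\dkf|_{\bk}}{1+w}\,\cdot(\cdots),
\ee
the point being to identify the factor $\frac{H_\k}{|\dkf|_{\bk}}$ that integrates to $\frac{d}{dh}\V_\k$. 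Putting the pieces together: integrate the pointwise inequality over $\S_h$, use the mass bound on the $\frac{w}{1+w}$ terms (they are bounded by $c_n\m_\k$ up to the $\Vk^2$ weight, which needs $\Vk^2/(1+w)\leq \Vk^2$, absorbed by the $\alpha^2$ and $\alpha^{-2}$ coefficients), and recognize the $\frac{1}{1+w}$ term as (a multiple of) $\alpha\,\frac{d}{dh}\V_\k(h)$. Rearranging yields
\be
\ddt{}{h}\V_{\k}(h) \geq \dfrac{1}{\a}\left[\dint_{\S_h} H_{\k}\Vk\,d\S_h^{\k} - (1+\a^{-2})c_n\m_{\k}\right].
\ee

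The main obstacle I anticipate is getting the split-and-Young-inequality bookkeeping exactly right so that the weights $\Vk$, $\Vk^2$, and $\frac{1}{1+w}$ land in the correct places: specifically, matching the factor appearing in the mass formula of Lemma \ref{k-mass}, namely $\Vk H_\k \frac{\Vk^2|\dkf|^2_{\bk}}{1+\Vk^2|\dkf|^2_{\bk}}$, against the terms produced by applying Young's inequality to the decomposition of $H_\k\Vk$, and simultaneously recovering the clean coefficient structure $\frac{1}{\alpha}$ and $(1+\alpha^{-2})$. A secondary technical point is justifying the differentiation of $\V_\k$ at a regular value $h$ via the first variation formula / coarea formula, which is standard but should be stated; since $h$ is assumed regular, $\S_h$ is a smooth closed hypersurface and no delicate measure-theoretic argument is needed at this stage.
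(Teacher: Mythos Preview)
Your first two ingredients are exactly right: the first variation gives
\[
\ddt{}{h}\V_{\k}(h)=\int_{\S_h}\frac{H_{\k}}{|\grad^{\bk}f|_{\bk}}\,d\S_h^{\bk},
\]
and Corollary~\ref{kmass-est2} gives the mass bound on the weighted integral of $H_\k V_\k$. The gap is the algebraic combination step. Your Young-type manipulations do not actually close: the displayed inequality
\[
H_{\k}\Vk\le \alpha^2 H_{\k}\Vk\,\tfrac{w}{1+w}+\alpha^{-2}H_{\k}\Vk\,\tfrac{w}{1+w}+H_{\k}\Vk\,\tfrac{1}{1+w}
\]
is trivially true (the right side equals $(\alpha^2+\alpha^{-2})H_\k V_\k\tfrac{w}{1+w}+H_\k V_\k\tfrac{1}{1+w}\ge H_\k V_\k$) but goes the wrong way for what you need, and the subsequent attempt to rewrite $H_\k V_\k\tfrac{1}{1+w}$ in terms of $H_\k/|\grad^{\bk}f|_{\bk}$ never lands anywhere. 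You yourself flag this bookkeeping as the main obstacle, and indeed it is not completed.

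The paper's argument avoids any pointwise Young inequality and instead uses a \emph{set-splitting} trick, which is much cleaner. Split $\S_h$ into $\{\Vk|\grad^{\bk}f|_{\bk}<\alpha\}$ and $\{\Vk|\grad^{\bk}f|_{\bk}\ge\alpha\}$. On the first piece, $\tfrac{1}{|\grad^{\bk}f|_{\bk}}>\tfrac{\Vk}{\alpha}$, so (using $H_\k\ge 0$ from Lemma~\ref{level-set-mean-convex})
\[
\ddt{}{h}\V_\k(h)\ge\int_{\{\Vk|\grad^{\bk}f|<\alpha\}}\frac{H_\k}{|\grad^{\bk}f|_{\bk}}\,d\S_h^{\bk}
\ge\frac{1}{\alpha}\int_{\{\Vk|\grad^{\bk}f|<\alpha\}}H_\k\Vk\,d\S_h^{\bk}.
\]
On the second piece, $\tfrac{w}{1+w}\ge\tfrac{\alpha^2}{1+\alpha^2}$, so the mass bound gives
\[
\int_{\{\Vk|\grad^{\bk}f|\ge\alpha\}}H_\k\Vk\,d\S_h^{\bk}\le(1+\alpha^{-2})c_n\m_\k.
\]
Adding and subtracting this piece inside the bracket yields the stated inequality. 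No delicate weight-balancing is needed; the parameter $\alpha$ is simply the threshold separating where $|\grad^{\bk}f|$ is small (so $\tfrac{d}{dh}\V_\k$ is large) from where it is large (so the mass controls the integral).

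For the record, a pointwise inequality approach \emph{can} be made to work: the inequality one actually needs is $\tfrac{\alpha}{u}+(1+\alpha^{-2})\tfrac{u^2}{1+u^2}\ge 1$ for $u=\Vk|\grad^{\bk}f|_{\bk}>0$, which is elementary to verify by cases $u\ge\alpha$ and $u<\alpha$. But this is essentially the set-splitting argument in disguise, and the paper's version is more transparent.
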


\begin{proof}
Let $\S_h$ be the level set  $\{ f = h \}$, which is smooth by hypothesis. As in \cite{H-L}, we are interested in studying the volume growth along level sets. Let us consider a variation along the level sets of $f$, $X: \S_h \times (- \veps , \veps) \longrightarrow \Hb^n_{\k}$, determined by
\bee
\pr_t X= \dfrac{\grad^{\bk} f}{|\grad^{\bk} f|^2_{\bk}},
\eee
with $X(\S_h, t =0) = \S_h$. Next, compute the first variation of $\V_{\k}$ with respect to  $X$ to obtain
\begin{align*}
\ddt{}{h} \V_{\k}(h) &= \dint_{\S_h} \dfrac{H_{\k}}{|\grad^{\bk} f|_{\bk}} \, d\S_h^{\k}.
\end{align*}
To estimate this quantity, recall that from Corollary \ref{kmass-est2}, if $\mathcal{R}_{f,\k}=R_{\k}(f) + \k^2 n(n-1) \geq 0$, then for the level set $\S_h$, we have
\be
c_n \m_{\k} \geq \dint_{\S_h} H_{\k}\Vk \dfrac{\Vk^2 |\grad^{\bk} f|^2_{\bk}}{1 + \Vk^2 |\grad^{\bk} f|^2_{\bk}} \, d\S_h^{\k},
\ee
where $c_n= 2(n-1)\w_{n-1}$.

For $\a \in [0,\infty)$, we have 
\begin{align*}
c_n \m_{\k} &\geq \dint_{\S_h} H_{\k}\Vk \dfrac{\Vk^2 |\grad^{\bk} f|^2_{\bk}}{1 + \Vk^2 |\grad^{\bk} f|^2_{\bk}} \, d\S_h^{\k} \\
& \geq \dfrac{\a^2}{\a^2 +1 }  \dint_{\S_h \cap  \{ \Vk |\grad^{\bk} f| \geq \a \} } H_{\k}\Vk  \, d\S_h^{\k},
\end{align*}
in particular,
\be
- \dint_{\S_h \cap  \{ \Vk |\grad^{\bk} f| \geq \a \} } H_{\k}\Vk  \, d\S_h^{\k} \geq -(1+ \a^{ -2} ) c_n \m_{\k}.
\ee
Then we estimate the rate of growth of the volume as follows
\begin{align*}
\ddt{}{h} \V_{\k}(h) &= \dint_{\S_h} \dfrac{H_{\k}}{|\grad^{\bk} f|_{\bk}} \, d\S_h^{\k} \\
&=  \dint_{\S_h \cap \{ \Vk |\grad^{\bk} f|_{\bk} < \a \} } \dfrac{H_{\k}}{|\grad^{\bk} f|_{\bk}} \, d\S_h^{\k} +  \dint_{\S_h  \cap  \{ \Vk |\grad^{\bk} f|_{\bk} \geq \a \}} \dfrac{H_{\k}}{|\grad^{\bk} f|_{\bk}} \, d\S_h^{\k}  \\
& \geq \dfrac{1}{\a} \dint_{\S_h \cap \{ \Vk |\grad^{\bk} f|_{\bk} < \a \} } \Vk H_{\k} \, d\S_h^{\k} \\
&=\dfrac{1}{\a} \left[ \dint_{\S_h} H_{\k}\Vk  \, d\S_h^{\k} -  \dint_{\S_h  \cap  \{ \Vk |\grad^{\bk} f|_{\bk} \geq \a \}} \Vk H_{\k} \, d\S_h^{\k} \right] \\
&\geq \dfrac{1}{\a} \left[ \dint_{\S_h} H_{\k} \Vk  \, d\S_h^{\k}  -(1+ \a^{ -2} ) c_n \m_{\k} \right]. 
\end{align*}
\end{proof}

\begin{lemma} \label{vol-k-estimate}
Suppose that $f$ is an asymptotically hyperbolic function with $\mathcal{R}_{f,\k} \geq 0$ and upward pointing mean curvature vector field. Let $h$ be a regular value of $f$ such that $\V_{\k}(h) > \frac{c_n}{n-1} \m_{\k}$. Then,
\be \label{vol-est}
\ddt{}{h} \V_{\k}(h) \geq c_n\dfrac{2\m_{\k}}{3\sqrt{3}}\( \dfrac{1}{2 \w_{n-1}\m_{\k}}\V_{\k}(h) -1 \)^{3/2} .
\ee
\end{lemma}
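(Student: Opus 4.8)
The plan is to combine the differential inequality from Lemma \ref{vol-est-alpha} with a Minkowski-type inequality bounding $\int_{\S_h} H_{\k}\Vk \, d\S_h^{\k}$ from below by a power of $\V_{\k}(h)$, and then optimize over the free parameter $\a$. First I would invoke the Minkowski-like inequality of Montiel--Ros used in \cite{D-G-S}: since almost every level set $\S_h$ is star-shaped and outer-minimizing in $\Hb^n_\k$ and $H_\k \geq 0$ by Lemma \ref{level-set-mean-convex} (the hypotheses of the ambient Theorem \ref{main} are inherited here), one has a lower bound of the schematic form $\int_{\S_h} H_\k \Vk \, d\S_h^{\k} \geq (n-1)|\S_h|_{\bk}\,\bk(|\S_h|_{\bk})$ for a suitable monotone function, which after using $\Vk = \cosh(\k r)$ and the isoperimetric comparison gives something like $\int_{\S_h} H_\k \Vk \, d\S_h^{\k} \geq (n-1)\V_\k(h)$. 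Actually, a cleaner route is to use the rescaled analogue of \eqref{RPI-AH} or the direct geometric inequality $\int_\S H\,d\S \geq (n-1)|\S|$ valid for mean-convex star-shaped hypersurfaces in hyperbolic space (this is exactly what feeds the Penrose inequality in \cite{D-G-S}), yielding $\int_{\S_h} H_\k \Vk\, d\S_h^{\k} \geq (n-1)\V_\k(h)$.

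Plugging this into Lemma \ref{vol-est-alpha} gives, for every $\a > 0$,
\bee
\ddt{}{h}\V_\k(h) \geq \dfrac{1}{\a}\left[(n-1)\V_\k(h) - (1 + \a^{-2})c_n\m_\k\right].
\eee
Writing $c_n = 2(n-1)\w_{n-1}$ and setting $u = \frac{1}{2\w_{n-1}\m_\k}\V_\k(h)$, the bracket becomes $(n-1)\cdot 2\w_{n-1}\m_\k\big[u - (1 + \a^{-2})\big]$, so
\bee
\ddt{}{h}\V_\k(h) \geq \dfrac{2(n-1)\w_{n-1}\m_\k}{\a}\left(u - 1 - \a^{-2}\right).
\eee
The hypothesis $\V_\k(h) > \frac{c_n}{n-1}\m_\k = 2\w_{n-1}\m_\k$ guarantees $u > 1$, so $u - 1 > 0$ and it makes sense to optimize the right-hand side over $\a$. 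Treating $\a$ as a variable and maximizing $g(\a) = \a^{-1}(u - 1 - \a^{-2}) = (u-1)\a^{-1} - \a^{-3}$, one finds $g'(\a) = -(u-1)\a^{-2} + 3\a^{-4} = 0$, i.e. $\a^2 = \frac{3}{u-1}$; substituting back gives $g(\a) = \frac{2}{3\sqrt{3}}(u-1)^{3/2}$. This yields exactly
\bee
\ddt{}{h}\V_\k(h) \geq 2(n-1)\w_{n-1}\m_\k\cdot\dfrac{2}{3\sqrt3}(u-1)^{3/2} = c_n\dfrac{2\m_\k}{3\sqrt3}\left(\dfrac{1}{2\w_{n-1}\m_\k}\V_\k(h) - 1\right)^{3/2},
\eee
which is \eqref{vol-est}.

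The main obstacle is establishing the Minkowski-type estimate $\int_{\S_h} H_\k \Vk \, d\S_h^{\k} \geq (n-1)\V_\k(h)$ in the rescaled hyperbolic space $\Hb^n_\k$, with the correct weight $\Vk = \cosh(\k r)$. One has to check that the Montiel--Ros / Brendle-type inequality used in \cite{D-G-S} scales correctly under $b \mapsto b_\k$ — essentially by the substitution $r \mapsto \k r$ it reduces to the $\k = 1$ case — and that the star-shaped, outer-minimizing hypotheses on almost every level set (assumed in Theorem \ref{main}) are precisely what is needed to apply it to $\S_h$. The remaining steps (the $\a$-optimization and the algebra identifying the constant $\frac{2}{3\sqrt3}$) are elementary calculus; the only subtlety is to record that $\a$ may be chosen pointwise in $h$ since the inequality of Lemma \ref{vol-est-alpha} holds for each fixed $\a > 0$ and each regular value $h$, and that the optimal choice $\a = \sqrt{3/(u-1)}$ is positive and finite exactly because of the standing assumption $\V_\k(h) > 2\w_{n-1}\m_\k$.
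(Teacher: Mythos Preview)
Your proof is correct and follows essentially the same route as the paper: combine Lemma~\ref{vol-est-alpha} with the Montiel--Ros inequality $\int_{\S_h} H_\k \Vk \, d\S_h^\k \geq (n-1)\V_\k(h)$ (equivalently $\frac{1}{c_n}\int_{\S_h} H_\k \Vk \geq \frac{1}{2\w_{n-1}}\V_\k(h)$), then optimize over $\a$ to obtain the critical point $\a = \sqrt{3}\,(u-1)^{-1/2}$ and the constant $\tfrac{2}{3\sqrt{3}}$. One small overreach worth noting: the paper explicitly remarks that, in contrast to \cite{H-L}, the outer-minimizing hypothesis is \emph{not} needed for this lemma---the Montiel--Ros inequality requires only that $\S_h$ be a closed mean-convex hypersurface (guaranteed by Lemma~\ref{level-set-mean-convex}), so you need not import the star-shaped/outer-minimizing assumptions from Theorem~\ref{main} here.
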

\begin{remark}
Note that in \cite{H-L} the level set $\S_h$ was assumed to be outer-minimizing. Even though we also assume the outer-minimizing condition in our main result, for this lemma it is not required as we do not rely in the Minkowski inequality for $\R^n$.
\end{remark}
\begin{proof}
Recall that from Lemma \ref{level-set-mean-convex}, the level set $\S_h$ is mean-convex. From the Minkowski-like inequality in the hyperbolic space proved in \cite{M-R} (c.f. \cite{D-G-S}), we have
\be
\dfrac{1}{c_n}\dint_{\S_h} H_{\k}\Vk  \, d\S_h^{\k} \geq \dfrac{1}{2\w_{n-1}}\V_{\k}(h).
\ee
Together with Lemma \ref{vol-est-alpha}, this gives
\be \label{vol-est1}
\ddt{}{h} \V_{\k}(h) \geq  \dfrac{c_n}{\a} \left[ \dfrac{1}{2 \w_{n-1}}\V_{\k}(h) -(1+ \a^{ -2} )\m_{\k} \right],
\ee
for each $\a \in [0,\infty)$. To simplify notation, set $\B = \frac{1}{2 \w_ {n-1}}\V_{\k}(h)$ and consider the right hand side of \eqref{vol-est1} as the function depending on $\a$ given by $g(\a)= \frac{c_n}{\a} \left[ \B  -(1+ \a^{ -2} )\m_{\k} \right]$. Thus
\begin{align*}
g'(\a) &= -\dfrac{c_n}{\a^2}\left[ \B  -(1+ \a^{ -2} )\m_{\k} \right] + \dfrac{c_n}{\a}\left[ \dfrac{2}{\a^3} \m_{\k} \right],
\end{align*}
and the only critical point of $g$ is located at 
\be
\a=\sqrt{3}\( \dfrac{\B}{\m_{\k}} -1 \)^{-1/2},
\ee
which is positive since $\B > \m_{\k}$ by our assumption. Moreover, $g$ attains its maximum value at this point, given  by
\begin{align*}
\dfrac{1}{\a} \left[ \B  -(1+ \a^{ -2} )\m_{\k} \right] &
=\dfrac{2\m_{\k}}{3\sqrt{3}}\( \dfrac{\B}{\m_{\k}} -1 \)^{3/2}.
\end{align*}
Hence, from \eqref{vol-est1} we obtain  
\be \label{vol-est2}
\ddt{}{h} \V_{\k}(h) \geq c_n\dfrac{2\m_{\k}}{3\sqrt{3}}\( \dfrac{1}{2 \w_{n-1} \m_{\k} }\V_{\k}(h) -1 \)^{3/2} .
\ee
\end{proof}

\begin{definition}
Let $f$ be a given balanced asymptotically hyperbolic function, we define the height $h_0$ as
\be \label{h0}
h_0 \defeq \sup\{ h \, : \, \V_1(h) \leq \sup\{ 2 \beta \m^{\frac{n-1}{n-2}}\w_{n-1},2 \beta \w_{n-1} \m \} \}, 
\ee
where $\beta>1$ is any fixed arbitrary constant and $\m=\m(f)$. In case the above set is empty, set $h_0 \defeq \min(f)$.
\end{definition}

Note that, if $\m < 1$, we have $\sup\{ 2 \beta \m^{\frac{n-1}{n-2}}\w_{n-1},2 \beta \w_{n-1} \m \} = 2 \beta \w_{n-1} \m$, which agrees with the definition of the corresponding value $h_0$ in \cite{H-L}.

\begin{remark}
As in \cite{H-L}, the choice of $\beta > 1$ has to be made to ensure that the constant $C$ appearing in Lemma \ref{f-h0-estimate} is finite. Note that although $C$ also depends on $\b$, by fixing it (as we do), we can consider the constant $C$ to be depending only on $n$, as the explicit value of $\b$ does not affect the results.
\end{remark}

\subsection{Rescaling}

Given a balanced asymptotically hyperbolic graph $f$ of mass $\m = \m(f) $, we can perfom a rescaling of the metric and the function to normalize it to have $\k$-mass $\m_{\k}(f)$ equal to 1, for $\k=\m^{\frac{1}{n-2}}$. More precisely, given an asymptotically hyperbolic function $f:\Hb^n\setminus\Omega \to \R$ with $R(f)  \geq -n(n-1)$, define the function (cf. \cite{H-L}):
\be
\tf(r,\theta) = \k^{-1}(f(\k r,\theta) - h_0),
\ee
for $\k > 0$ and $h_0$ as before.

The next lemma provides some formulas that will be used in what follows. The proof consists of straightforward computations.
\begin{lemma}  \label{formulae}
Let $f:\Hb^{n} \setminus \ovr{\Omega} \To \R$ smooth. Define $\tf$ as above. Denote by $\la \cdot,\cdot \ra_{\k}$ and $\la \cdot, \cdot \ra$ the product with respect to $b_{\k}$ and $b$, respectively. Then
\begin{enumerate}[(i)]
\item $| \grad^{\bk} \tf |_{\bk} (r,\theta) = |\grad^b f |_{b} (\k r,\theta)$,
\item $\Lap_{\bk} \tf  (r,\theta) = \k \Lap_{b} f(\k r,\theta)$,
\item $ |\hess_{\bk} \tf |_{\bk}(r,\theta) = \k |\hess_{b} f |_{b}(\k r,\theta)$, 
\item $|\hess_{\bk}(\tf)(\grad^{\bk}\tf,\cdot)| (r,\theta)=\k |\hess_{b}(f)(\grad^{b}f,\cdot)| (\k r,\theta)$,
\item $\la \hess_{\tb} (\tf), d\tf \otimes d\tf \ra_{\k} (r,\theta) = \k \la \hess_{b} (f), df \otimes df \ra (\k r,\theta)$,
\item $\la d\tf, d\Vk \ra_{\k}(r,\theta) =\k \la df, d V \ra( \k r, \theta)$,
\item $\la d\Vk, d\Vk \ra_{\k} (r,\theta)= \k^2 \la dV , d V \ra( \k r, \theta)$, and
\item $\la \hess_{\tb}(\tf),d\tf \otimes d\Vk \ra_{\k} (r,\theta) = \k^2 \la \hess_{b}(f),df \otimes dV \ra (\k r,\theta) $.
\end{enumerate}
\end{lemma}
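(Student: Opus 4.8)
The plan is to verify each identity by a direct computation, exploiting the simple scaling structure of $\tf(r,\theta) = \k^{-1}(f(\k r,\theta)-h_0)$ together with the relation $\bk = dr^2 + \k^{-2}\sinh^2(\k r)\,\s$ between the metric on $\Hb^n_\k$ and the metric $b = dr^2 + \sinh^2 r\,\s$ on $\Hb^n$. The key observation is that the map $\phi_\k \colon \Hb^n_\k \to \Hb^n$, $(r,\theta)\mapsto(\k r,\theta)$, is a homothety: $\phi_\k^* b = \k^2 \bk$. Indeed, $\phi_\k^*(dr^2) = \k^2\,dr^2$ and $\phi_\k^*(\sinh^2 r\,\s) = \sinh^2(\k r)\,\s = \k^2\cdot\k^{-2}\sinh^2(\k r)\,\s$, so $\phi_\k^* b = \k^2(dr^2 + \k^{-2}\sinh^2(\k r)\,\s) = \k^2\bk$. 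Since $\tf = \k^{-1}(\phi_\k^* f - h_0)$, all eight formulas reduce to tracking how gradients, Hessians, Laplacians and pointwise inner products transform under a constant conformal rescaling of the metric combined with the scalar prefactor $\k^{-1}$ and the argument shift $r\mapsto \k r$.

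First I would record the basic scaling rules. If $\tilde g = \lambda^2 g$ for a constant $\lambda>0$, then for functions $u,w$: $\grad^{\tilde g} u = \lambda^{-2}\grad^g u$, $\la du, dw\ra_{\tilde g} = \lambda^{-2}\la du,dw\ra_g$, $|\grad^{\tilde g}u|_{\tilde g} = \lambda^{-1}|\grad^g u|_g$, $\Lap_{\tilde g} u = \lambda^{-2}\Lap_g u$, and (since Christoffel symbols of $\tilde g$ and $g$ coincide for constant $\lambda$) $\hess_{\tilde g} u = \hess_g u$ as a $(0,2)$-tensor, hence $|\hess_{\tilde g}u|_{\tilde g} = \lambda^{-2}|\hess_g u|_g$ and $\hess_{\tilde g}u(\grad^{\tilde g}u,\cdot) = \lambda^{-2}\hess_g u(\grad^g u,\cdot)$. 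Applying these with $\tilde g = \bk$, $g = \phi_\k^* b$ (so $\lambda = \k^{-1}$, i.e. $\phi_\k^* b = \k^2\bk$), and using that $\phi_\k$ pulls back the metric quantities of $(\Hb^n,b)$ evaluated at $(\k r,\theta)$ to the corresponding quantities of $(\Hb^n, \phi_\k^*b)$ evaluated at $(r,\theta)$, each identity falls out after accounting for the extra scalar factors. For instance, for (i): $\grad^{\bk}\tf = \k^{-1}\grad^{\bk}(\phi_\k^*f) = \k^{-1}\cdot\k^{2}\grad^{\phi_\k^*b}(\phi_\k^*f)$, and $|\grad^{\phi_\k^*b}(\phi_\k^*f)|_{\phi_\k^*b}(r,\theta) = |\grad^b f|_b(\k r,\theta)$; combining, $|\grad^{\bk}\tf|_{\bk} = \k^{-1}\cdot\k\cdot|\grad^{\phi_\k^*b}(\phi_\k^*f)|_{\phi_\k^*b} = |\grad^b f|_b(\k r,\theta)$, where I used $|\cdot|_{\bk} = \k|\cdot|_{\phi_\k^*b}$ on vectors. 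For (ii): $\Lap_{\bk}\tf = \k^{-1}\Lap_{\bk}(\phi_\k^*f) = \k^{-1}\cdot\k^2\Lap_{\phi_\k^*b}(\phi_\k^*f) = \k(\Lap_b f)(\k r,\theta)$.

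The remaining identities (iii)--(viii) are handled the same way, with the only new ingredient being that $\Vk(r) = \cosh(\k r) = V(\k r) = (\phi_\k^* V)(r,\theta)$, so derivatives of $\Vk$ transform just like derivatives of $\phi_\k^* f$ but without the $\k^{-1}$ prefactor — this produces the extra power of $\k$ in (vi), (vii), (viii) compared to the pure-$\tf$ identities. Concretely: (iii) $|\hess_{\bk}\tf|_{\bk} = \k^{-1}\cdot\k^2\cdot\k^{-2}|\hess_b f|_b(\k r,\theta)\cdot(\text{wait: }|\cdot|_{\bk}=\k^{?})$ — one must be careful that $|\hess|_{\bk}$ involves two factors of the inverse metric, giving $|\hess_{\bk}\tf|_{\bk} = \k^{-1}\cdot(\k^2)\cdot|\hess_{\phi_\k^*b}(\phi_\k^*f)|_{\bk}$ and $|\cdot|_{\bk}$ on a $(0,2)$-tensor equals $\k^2|\cdot|_{\phi_\k^*b}$, so altogether $\k^{-1}\cdot\k^2\cdot\k^2\cdot\k^{-2}|\hess_b f|_b = \k|\hess_b f|_b(\k r,\theta)$, as claimed — and similarly for (iv). For (v) and (viii), which pair a Hessian with one or two one-forms, the inner product $\la\cdot,\cdot\ra_{\bk}$ over the appropriate number of slots supplies the matching powers of $\k$, and the scalar prefactors ($\k^{-1}$ per factor of $\tf$, none per factor of $\Vk$) give the stated overall power. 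The main (and only real) obstacle is bookkeeping: keeping straight, in each term, how many factors of $\k^{-1}$ come from the definition of $\tf$, how many factors of $\k^{\pm 2}$ come from raising/lowering indices with $\bk$ versus $\phi_\k^*b = \k^2\bk$, and that the homothety $\phi_\k$ leaves Christoffel symbols unchanged so that Hessians transform as $(0,2)$-tensors with no derivative-of-$\lambda$ correction. Once the homothety $\phi_\k^* b = \k^2\bk$ and the constancy of $\lambda$ are in place, every identity is a one-line verification.
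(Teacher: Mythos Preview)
Your approach is correct and matches the paper's, which simply states that the proof ``consists of straightforward computations'' and gives no further detail. Your homothety observation $\phi_\k^* b = \k^2 b_\k$ is a clean way to organize those computations; note one small slip in your parenthetical justification for (i) --- on vectors one has $|\cdot|_{b_\k} = \k^{-1}|\cdot|_{\phi_\k^* b}$, not $\k|\cdot|_{\phi_\k^* b}$ --- but your displayed intermediate factors and all eight final identities are correct.
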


\begin{lemma} \label{scalar-cond-pres}
If $f$ is an asymptotically hyperbolic function with $R(f) \geq -n(n-1)$, then the scalar curvature of the graph of $\tilde f$ in $\Hb_{\k}^{n+1}$ is greater than or equal to $-\k^2 n(n-1)$, that is, $\mathcal{R}_{\tf,\k} \geq 0$, where $\mathcal{R}_{f,\k} = R_{\k}(f) + \k^2 n(n-1)$.
\end{lemma}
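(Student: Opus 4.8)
The plan is to exploit the scaling formulas from Lemma~\ref{formulae} to show that the quantity $\mathcal{R}_{\tf,\k}$, when evaluated at a point $(r,\theta)$, equals $\mathcal{R}_{f,1} = R(f) + n(n-1)$ evaluated at the point $(\k r,\theta)$, which is non-negative by hypothesis. The starting point is the explicit formula for the scalar curvature of a graph in $\Hb^{n+1}_\k$ in terms of the graphing function, as recorded in \cite[Section~3]{D-G-S}; one has an expression for $\mathcal{R}_{f,\k} = R_\k(f) + \k^2 n(n-1)$ as a rational expression built out of $\Vk$, $|\grad^{\bk} f|_{\bk}$, $\Lap_{\bk} f$, $|\hess_{\bk} f|_{\bk}^2$, $\hess_{\bk}(f)(\grad^{\bk} f,\cdot)$, $\langle \hess_{\bk} f, df\otimes df\rangle_\k$, and the terms coupling to $\Vk$ such as $\langle df, d\Vk\rangle_\k$ and $\langle \hess_{\bk}(f), df\otimes d\Vk\rangle_\k$. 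The key structural observation is that every individual term in that formula scales homogeneously under $f \mapsto \tf$: by items (i)--(viii) of Lemma~\ref{formulae}, each term evaluated at $(r,\theta)$ for $\tf$ with background $b_\k$ picks up a uniform power of $\k$ when compared with the corresponding term for $f$ with background $b$ evaluated at $(\k r,\theta)$.

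First I would write down the formula of \cite{D-G-S} for $\mathcal{R}_{f,\k}$ explicitly (or at least schematically, grouping terms by their homogeneity degree), and verify that after substituting $\tf$ for $f$ and applying Lemma~\ref{formulae} term by term, the whole expression becomes exactly $\k^2$ times the corresponding expression for $f$ with $\k=1$, evaluated at $(\k r,\theta)$. Concretely, the numerator of $\mathcal{R}_{\tf,\k}$ should be $\k^2$ times the numerator of $\mathcal{R}_{f,1}\circ(\k r,\theta)$ and the denominator $(1+\Vk^2|\grad^{\bk}\tf|_{\bk}^2)$ is exactly $(1+V^2|\grad^b f|_b^2)$ evaluated at $(\k r,\theta)$ by item (i) together with $\Vk(r) = \cosh(\k r) = V(\k r)$. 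Hence
\be
\mathcal{R}_{\tf,\k}(r,\theta) = \k^2\,\mathcal{R}_{f,1}(\k r,\theta) = \k^2\big(R(f)(\k r,\theta) + n(n-1)\big) \geq 0,
\ee
since $R(f) \geq -n(n-1)$ by hypothesis. Because $\mathcal{R}_{\tf,\k} = R_\k(\tf) + \k^2 n(n-1)$, this is precisely the assertion that the scalar curvature of $\graph[\tf]$ in $\Hb^{n+1}_\k$ is at least $-\k^2 n(n-1)$.

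The main obstacle is bookkeeping: one must be careful that the identity $\Vk(r) = V(\k r)$ is used consistently (the rescaled warping function at radius $r$ equals the unrescaled one at radius $\k r$), and that the derivatives $d\Vk$ scale as in items (vi)--(viii) — note the $\k^2$ rather than $\k$ in (vii) and (viii), reflecting that $d\Vk$ involves $\k \sinh(\k r)\,dr$. A mismatch of even one power of $\k$ in any term would break the argument, so the real content of the proof is checking that the particular combination of terms appearing in the \cite{D-G-S} formula is in fact homogeneous of the right degree. Since the graph scalar-curvature formula is Gauss-equation-derived and hence quadratic in second derivatives plus lower-order curvature terms of the ambient and induced metrics, and since $b_\k$ itself is the pullback of $b$ under the dilation $r\mapsto \k r$ composed with the scaling of $\sigma$, this homogeneity is structurally guaranteed; I would phrase the proof as "the claim follows from a direct computation using Lemma~\ref{formulae} and the scaling $\Vk(r)=V(\k r)$, together with the scalar curvature formula in \cite[Section~3]{D-G-S}," and include the term-by-term scaling count as the substance.
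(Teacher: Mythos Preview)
Your proposal is correct and follows essentially the same approach as the paper: both write out the explicit formula for $\mathcal{R}_{\tf,\k}$ from \cite[Eq.~(6)]{D-G-S}, apply Lemma~\ref{formulae} term by term together with $\Vk(r)=V(\k r)$, and obtain the identity $\mathcal{R}_{\tf,\k}(r,\theta)=\k^2\big(R(f)(\k r,\theta)+n(n-1)\big)$, from which the claim follows immediately. The paper simply carries out the full term-by-term computation explicitly rather than describing the homogeneity schematically, but the substance is identical.
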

\begin{proof}
For simplicity we avoid writing the dependence on $\theta$. From \cite[Eq. (6)]{D-G-S} and using Lemma \ref{formulae}, we see that
\begin{align*}
\begin{split}
&\mathcal{R}_{\tf,\k}(r) \\
=&\dfrac{\Vk(r)^2}{1 + \Vk(r)^2|\grad^{\bk} \tf |^2_{\bk}(r)} \bigg[ (\Lap_{\bk} \tf (r))^2  - |\hess_{\bk} \tf |^2_{\bk}(r)     \\
&+ \dfrac{2 \Vk(r)^2}{1 + \Vk(r)^2 |\grad^{\bk} \tf |^2_{\bk}(r)} \( |\hess_{\bk} \tf (\grad^{\bk} \tf,\cdot )|^2_{\bk}(r) - \Lap_{\bk}\tf(r) \la \hess_{\bk}\tf, d\tf \otimes d\tf \ra_{\k}(r) \) \\
&+ \dfrac{2\la d\tf, d\Vk \ra(r)}{\Vk(r)(1 + \Vk(r)^2 |\grad^{\bk} \tf |^2_{\bk}(r))} \( \Lap_{\bk}\tf(r) - \Vk(r)^2 \la \hess_{\bk}\tf,d\tf \otimes d\tf \ra_{\k}(r)  +\dfrac{\la d\tf, d\Vk \ra_{\k}(r)}{\Vk(r)}\) \\
&+2 \dfrac{\la d\tf, d\Vk \ra(r)}{\Vk(r)}\Lap_{\bk} \tf(r) -\dfrac{2 }{1 + \Vk(r)^2 |\grad^{\bk} \tf |^2_{\bk}(r)}| \grad^{\bk} \tf |^2_{\bk}(r)\bigg\vert \dfrac{d\Vk(r)}{\Vk(r)} \bigg\vert^2 \\
& -\dfrac{4}{1 + \Vk(r)^2 |\grad^{\bk} \tf |^2_{\bk}(r)}\dfrac{1}{\Vk(r)}\la \hess_{\bk} \tf, d\tf \otimes d\Vk \ra (r) \bigg]
\end{split}\\
\begin{split}
=&\dfrac{\k^2 V(\k r)^2}{1 + V(\k r)^2|\grad^{b} f |^2_{b}(\k r)} \bigg[ (\Lap_{b} f (\k r))^2  -  |\hess_{b} f |^2_{b}(\k r)     \\
&+ \dfrac{2 V(\k r)^2}{1 + V(\k r)^2 |\grad^{b} f |^2_{b}(\k r)} \( |\hess_{b} f (\grad^{b} f,\cdot )|^2_{b}(\k r) -  \Lap_{b}f(\k r) \la \hess_{b}f, df \otimes df \ra(\k r) \) \\
&+ \dfrac{2  \left\la  df, dV \right\ra(\k r)}{V(\k r)(1 + V(\k r)^2 |\grad^{b} f |^2_{b}(\k r))} \(  \Lap_{b}f(\k r) - V(\k r)^2 \la \hess_{b}f,df \otimes df \ra(\k r)  +\dfrac{ \left\la df, dV \right\ra(\k r)}{V(\k r)}\) \\
&+2 \dfrac{ \left\la df, dV \right\ra(\k r)}{V(\k r)} \Lap_{b} f(\k r) -\dfrac{2 }{1 + V(\k r)^2 |\grad^{b} f |^2_{b}(\k r)}| \grad^{b} f |^2_{b}(\k r) \bigg\vert \dfrac{dV(\k r)}{V(\k r)} \bigg\vert^2 \\
&-\dfrac{4 }{1 + V(\k r)^2 |\grad^{b} f |^2_{b}(\k r)}  \dfrac{1}{V(\k r)}\la \hess_{b} f, df \otimes dV \ra(\k r) \bigg]
\end{split} \\
&=\k^2 (R(f)(\k r) + n(n-1)).
\end{align*}
\end{proof}

\begin{lemma} \label{mass-1}
 Let $f$ be a balanced asymptotically hyperbolic function of mass $\m=\m_1(f)$ and define the function 
\be \label{f-tilde}
\tf(r,\theta) = \m^{-\frac{1}{n-2}}(f( \m^{\frac{1}{n-2}} r, \theta) - h_0).
\ee 
Then 
\bee
\m_{\m^{\frac{1}{n-2}}}(\tf ) = 1.
\eee
\end{lemma}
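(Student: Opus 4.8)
The plan is to compute $\m_\kappa(\tf)$ directly from the definition of the $\kappa$-mass, with $\kappa = \m^{1/(n-2)}$, and relate it to $\m_1(f) = \m$ by tracking how each term in the mass integrand transforms under the rescaling $\tf(r,\theta) = \kappa^{-1}(f(\kappa r,\theta) - h_0)$. I would first note that the additive constant $-h_0$ is irrelevant: $\m_\kappa$ depends on $f$ only through $df$, so $\m_\kappa(\tf) = \m_\kappa\big(\kappa^{-1} f(\kappa\,\cdot\,,\cdot)\big)$, and I may as well work with the function $g(r,\theta) = \kappa^{-1} f(\kappa r,\theta)$. Then $e_\kappa = V_\kappa^2\, dg\otimes dg$, and by Lemma \ref{formulae}(i) one has $V_\kappa(r)^2|\grad^{b_\kappa} g|_{b_\kappa}^2(r,\theta) = V(\kappa r)^2|\grad^b f|_b^2(\kappa r,\theta)$ — i.e. the quantity $V_\kappa^2|\grad^{b_\kappa}\tf|^2_{b_\kappa}$ is exactly the original $V^2|\grad^b f|^2_b$ evaluated at the rescaled point. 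This is the key identity that makes the denominator in \eqref{VR-div} invariant under the rescaling.

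Next I would use the divergence identity \eqref{VR-div} together with Lemma \ref{scalar-cond-pres}: since $\mathcal{R}_{\tf,\kappa}(r,\theta) = \kappa^2\,\mathcal{R}_{f,1}(\kappa r,\theta)$, and since by Lemma \ref{k-mass} (or rather the computation in \cite[Section 3]{D-G-S} reproduced there) the $\kappa$-mass is the total flux at infinity of the vector field appearing on the right of \eqref{VR-div}, it suffices to show that
\[
c_n\,\m_\kappa(\tf) \;=\; \dint_{\Hb^n_\kappa}\mathcal{R}_{\tf,\kappa}\,V_\kappa\,d\mu^{b_\kappa}
\;=\; \dint_{\Hb^n}\mathcal{R}_{f,1}\,V\,d\mu^b \;=\; c_n\,\m_1(f),
\]
at least in the entire case, and with the obvious modification (the boundary term transforming compatibly) in the minimal-boundary case. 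So the real content is the change-of-variables computation for the integral $\int \mathcal{R}_{\tf,\kappa}\,V_\kappa\,d\mu^{b_\kappa}$. Under $r\mapsto \kappa r$, the volume element $d\mu^{b_\kappa} = \big(\tfrac{\sinh(\kappa r)}{\kappa}\big)^{n-1} dr\, d\sigma$ pulls back (writing $\rho = \kappa r$) to $\kappa^{-n}(\sinh\rho)^{n-1}d\rho\,d\sigma = \kappa^{-n}\,d\mu^b$; the factor $V_\kappa(r) = \cosh(\kappa r) = V(\rho)$ is unchanged; and $\mathcal{R}_{\tf,\kappa}(r,\theta) = \kappa^2\,\mathcal{R}_{f,1}(\kappa r,\theta)$ by Lemma \ref{scalar-cond-pres}. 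Hence the integral scales by $\kappa^2\cdot\kappa^{-n} = \kappa^{2-n}$, giving $c_n\,\m_\kappa(\tf) = \kappa^{2-n}\, c_n\,\m_1(f) = \m^{(2-n)/(n-2)}\cdot c_n\,\m = c_n$, i.e. $\m_\kappa(\tf) = 1$, as claimed.

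The step I expect to require the most care is not any single computation but the bookkeeping that justifies passing from \eqref{VR-div} to the identity $c_n\,\m_\kappa(f) = \int \mathcal{R}_{f,\kappa} V_\kappa\, d\mu^{b_\kappa}$ (plus boundary term) — i.e. verifying that the decay hypotheses in Definition \ref{AHgraph-def} are preserved under the rescaling, so that the flux integral at infinity defining $\m_\kappa(\tf)$ genuinely converges and equals the bulk integral. Concretely: $V_\kappa^2|\grad^{b_\kappa}\tf|^2_{b_\kappa}\to 0$ at infinity because it equals $V^2|\grad^b f|^2_b$ at the rescaled point and the latter tends to $0$; and conditions (2),(3) of Definition \ref{def-AH} for $e_\kappa$ follow from the same conditions for $e$ together with the scaling of $\cosh r_b\, d\mu$ worked out above, so the integrability needed for the limit in Definition \ref{mass-AHgraph} to exist is inherited. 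Once this is in place, the scaling computation in the previous paragraph — which is essentially the dimensional-analysis observation that the hyperbolic mass has weight $n-2$ under dilations, exactly as in the asymptotically flat case of \cite{H-L} — closes the proof. Alternatively, and perhaps more cleanly, one can bypass the bulk integral entirely and verify the identity directly at the level of the mass functional \eqref{mass-functional}: each of the three groups of terms $V_\kappa(\dv^{b_\kappa}e_\kappa - d\tr^{b_\kappa}e_\kappa)$, $(\tr^{b_\kappa}e_\kappa)dV_\kappa$, $e_\kappa(\grad^{b_\kappa}V_\kappa,\cdot)$ evaluated on $\nu_r$ and integrated over $S_r$ with $d\mu^{b_\kappa}$ transforms, via Lemma \ref{formulae}(ii)--(viii) and the volume scaling, by a factor $\kappa^{2-n}$ relative to the corresponding term for $f$ over $S_{\kappa r}$ with $d\mu^b$, and the constant shift $h_0$ drops out since $d h_0 = 0$; taking $r\to\infty$ gives $\m_\kappa(\tf) = \kappa^{2-n}\m_1(f) = 1$.
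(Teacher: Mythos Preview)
Your proposal is correct, and in fact you describe two valid routes. The paper takes precisely your ``alternative'' approach at the end: it writes out the mass integrand
\[
\big(\Vk(\dv^{\bk}\ek - d\tr^{\bk}\ek) + (\tr^{\bk}\ek)\,d\Vk - \ek(\grad^{\bk}\Vk,\cdot)\big)(\pr_r)
\]
for $\tf$ at $(r,\theta)$, expands it via the identities in Lemma~\ref{formulae}, and shows it equals $\k$ times the corresponding integrand for $f$ at $(\k r,\theta)$; then the area element $\sinh(\k r)^{n-1}\k^{-(n-1)}\,dS_{n-1}$ contributes a further $\k^{-(n-1)}$, and taking $r\to\infty$ (equivalently $\tilde r = \k r\to\infty$) gives $\m_\k(\tf)=\k^{2-n}\m$.

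Your primary route --- going through the bulk identity $c_n\m_\k = \int \mathcal{R}_{\cdot,\k}\Vk\,d\mu^{\bk}$ plus boundary term and invoking the already-proved scaling $\mathcal{R}_{\tf,\k}(r)=\k^2\mathcal{R}_{f,1}(\k r)$ from Lemma~\ref{scalar-cond-pres} --- is a genuinely different organization. It has the virtue of reusing the scalar-curvature scaling wholesale rather than redoing term-by-term computations, and it makes the dimensional-analysis content (mass has weight $n-2$) transparent. The cost, which you correctly flag, is the extra bookkeeping needed to pass from the flux-at-infinity definition of $\m_\k$ to the bulk integral: one must use that $\Vk^2|\grad^{\bk}\tf|^2\to 0$ so that the denominator in \eqref{VR-div} tends to $1$, and in the minimal-boundary case one must also check that the boundary term scales compatibly. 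The paper's direct computation avoids this by never leaving the surface integral at infinity.
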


\begin{proof}
From Definition \ref{k-mass}, the $\k$-mass of $\tf$ is given by
\begin{align*}
\m_{\k}(\tf) &= \dfrac{1}{c_n} \lm{r}{\infty} \dint_{S_r} \( \Vk\( \dv^{\bk} \ek - d \tr^{\bk} e \) + \( \tr^{\bk} \ek \) - \ek\(\grad^{\bk} \Vk, \cdot\)   \) \( \nu_r \) d\mu^{\bk}. 
\end{align*}

Direct calculations as in \cite[Section 3]{D-G-S} combined with Lemma \ref{formulae} give a way to write the integrand (dropping the dependence on $\theta$ as before) as follows: 
\begin{align*}
&\Vk(\dv^{\bk} \ek(\pr_r) - d \tr^{\bk} \ek(\pr_r)) + (\tr^{\bk} \ek) d(\Vk)(\pr_r) - \ek(\grad^{\bk} \Vk, \pr_r) ) \\
\begin{split}
=&V(\k r)^3 (\Lap_{\bk} \tf)(r)  d\tf(\pr_r) - V(\k r)^3\la \hess_{\bk}(\tf)(r) , d\tf (r) \otimes \, \pr_r \, \ra \\
&\quad  + V(\k r)^2\la d (V(\k r)), d\tf \ra  d\tf(\pr_r) - V(\k r)^2 |d\tf|_{b_{\k}}^2(r) d(V(\k r))(\pr_r) 
\end{split}\\
\begin{split}
=&\k V(\k r)^3  (\Lap_b f)(\k r)  df(\pr_r)(\k r) - \k V(\k r)^3\la \hess_{b}(f)(\k r) , df (\k r)  \otimes \, \pr_r \, \ra \\
&\quad  + \k V(\k r)^2\la dV(\k r), df(\k r) \ra df(\k r)(\pr_r) - \k V(\k r)^2  |df|^2_b(\k r)  dV(\k r)(\pr_r) 
\end{split}\\
\begin{split}
&= \k \( V\( \dv^{b} e - d \tr^{b} e \) + \( \tr^{b} e \) - e\(\grad^{b} V, \cdot\)   \) (\k r), 
\end{split}
\end{align*}
where in the last line we mean that the whole expression in parenthesis is being evaluated at $\k r$. Hence, setting $\k = \m^{\frac{1}{n-2}}$,
\begin{align*}
\m_{\k}(\tf) &= \dfrac{1}{c_n} \lm{r}{\infty} \dint_{S_r} \( \Vk\( \dv^{\bk} \ek - d \tr^{\bk} e \) + \( \tr^{\bk} \ek \) - \ek\(\grad^{\bk} \Vk, \cdot\)   \) \( \nu_r \) d\mu^{\bk} \\
&= \dfrac{1}{c_n} \lm{r}{\infty} \dint_{S_r} \k \(\( V\( \dv^{b} e - d \tr^{b} e \) + \( \tr^{b} e \) - e\(\grad^{b} V, \cdot\)   \) (\k r) \)\( \nu_r \) \dfrac{\sinh(\k r)^{n-1}}{\k^{n-1}} dS_{n-1} \\
&= \dfrac{1}{c_n} \lm{\tilde r}{\infty} \dint_{S_{\frac{\tilde r}{\k}}} \k \(\( V\( \dv^{b} e - d \tr^{b} e \) + \( \tr^{b} e \) - e\(\grad^{b} V, \cdot\) \) (\tilde r) \) \( \nu_r \) \dfrac{V(\tilde r)^{n-1}}{\k^{n-1}} dS_{n-1} \\
&=\dfrac{\m}{\k^{n-2}}, \\
\end{align*}
that is,
\bee
\m_{\m^{\frac{1}{n-2}}}(\tf) = 1.
\eee
\end{proof}

The following lemma expresses the volume of a level set of $\tf$ with respect to $b_{\k}$  (as above) in terms of the volume  of a level set of $f$ with respect to $b$. This relation will be particularly useful to derive the differential inequality in Lemma \ref{f-h0-estimate} below.

\begin{lemma} \label{vol-rel}
Suppose that the level set of $\tf$, $\tilde{\S}_h = \{ \tf = h \}$, is smooth and  star-shaped. Then 
\be
\tilde \V_{\k}(h) = \dfrac{1}{\k^{n-1} }\V_1(h_0 + \k h), 
\ee
where $\tilde \V_{\k}(h)$ denotes the volume of $\tilde{\S}_h$ with respect to $b_{\k}$.
\end{lemma}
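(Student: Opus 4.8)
The plan is to unwind the definition of the rescaled function $\tf(r,\theta) = \k^{-1}(f(\k r,\theta) - h_0)$ and track how level sets transform. First I would observe that $\tf(r,\theta) = h$ holds exactly when $f(\k r,\theta) = h_0 + \k h$, so the level set $\tilde\S_h = \{\tf = h\}$ in $\Hb^n_{\k}$ is the image of the level set $\S_{h_0+\k h} = \{f = h_0+\k h\}$ in $\Hb^n$ under the radial dilation $D_{\k}: (r,\theta) \mapsto (\k r, \theta)$, i.e. $\tilde\S_h = D_{\k}^{-1}(\S_{h_0+\k h})$. This is the geometric heart of the statement; once it is in place, the claim reduces to a change-of-variables computation for the $(n-1)$-dimensional Hausdorff measure.

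Next I would compare the measures. The point is that $D_{\k}$ pulls the background metric $b$ (in the form $b = dr^2 + \sinh^2(r)\,\s$) back to something simply related to $b_{\k} = dr^2 + \k^{-2}\sinh^2(\k r)\,\s$: indeed $D_{\k}^*b = \k^2\,dr^2 + \sinh^2(\k r)\,\s = \k^2 b_{\k}$. Since the level sets are assumed star-shaped, they are radial graphs over $\bS^{n-1}$, and I would parametrize $\S_{h_0+\k h}$ by a function $\rho(\theta)$ on the sphere; its preimage $\tilde\S_h$ is then parametrized by $\k^{-1}\rho(\theta)$. Writing the induced area elements explicitly in these coordinates — or, more cleanly, using that $D_{\k}$ is a conformal-type rescaling with $D_{\k}^* b = \k^2 b_{\k}$, so that it scales $(n-1)$-volumes of hypersurfaces by $\k^{n-1}$ — gives $|\S_{h_0+\k h}|_b = |D_{\k}(\tilde\S_h)|_b = \k^{n-1}\,|\tilde\S_h|_{b_{\k}}$, which rearranges to $\tilde\V_{\k}(h) = \k^{-(n-1)}\,\V_1(h_0+\k h)$, exactly the asserted identity (recalling $\V_1 = |\cdot|_b$).

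The main obstacle — really the only subtle point — is justifying the volume scaling rigorously rather than just formally: one must check that the map $D_{\k}$ satisfies $D_{\k}^* b = \k^2 b_{\k}$ (a direct substitution in polar coordinates), and then that a diffeomorphism pulling back the ambient metric to $\k^2$ times another metric multiplies the $(n-1)$-dimensional volume of any hypersurface by $\k^{n-1}$. The star-shapedness hypothesis is what guarantees the level set is a genuine radial graph, so that this computation can be carried out globally in polar coordinates with no issue about the level set wrapping or self-intersecting; I would use it precisely at that step. Everything else is bookkeeping with the substitution $r \mapsto \k r$ and the formula $\sinh^2(\k r)/\k^2$ appearing in $b_{\k}$, analogous to the change of variables already performed in the proof of Lemma \ref{mass-1}.
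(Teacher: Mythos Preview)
Your proposal is correct and follows essentially the same approach as the paper: both identify $\tilde\Sigma_h$ with the radial dilate of $\Sigma_{h_0+\k h}$, use star-shapedness to write the level sets as radial graphs over $\bS^{n-1}$, and then compare the induced metrics. The paper carries this out by writing down the induced metric components $\gamma^{\k}_{ij}$ and $\gamma_{ij}$ explicitly and observing $\gamma^{\k} = \k^{-2}\gamma$; your pullback identity $D_{\k}^* b = \k^2 b_{\k}$ is simply the coordinate-free packaging of that same computation, and the volume scaling follows identically.
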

\begin{proof}

We first note that $\tilde{\S}_h = \{ f(\k r,\theta) = \k h + h_0 \}$. Let $\S$ be the surface obtained by rescaling the surface $\S_{\k h + h_0}= \{ f = \k h + h_0 \}$ by $\k^{-1}$. By hypothesis, $\S$ is star-shaped and hence we can consider a parametrization $\Phi: \bS^{n-1} \longrightarrow \S \subset \R^n$ given by $\Phi(\theta) = (\phi(\theta),\theta)$, where $\phi$ is a smooth positive function on $\bS^{n-1}$. Then, clearly $\tf (\Phi(\theta))= h$.  Let $\g^{\k}$ denote the metric $b_{\k}$ restricted to $\S_h$, which is given by

\begin{align*}
\g^{k}_{ij}(\theta) &= \phi_i(\theta)\phi_j(\theta) + \dfrac{\sinh(\k \phi(\theta))^2}{\k^2}\s_{ij} \\
&=\dfrac{1}{\k^2} \left[ \k^2 \phi_i(\theta)\phi_j(\theta) + \sinh(\k \phi(\theta))^2\s_{ij} \right],
\end{align*}
where the latin indices denote derivatives on $\bS^{n-1}$.
On the other hand, let $\g$ be the metric $b$ restricted to $\S_{\k h + h_0}$, that is, the surface obtained by scaling $\S$ by $\k$, given by
\begin{align*}
\g_{ij}(\theta) =  \k^2 \phi_i(\theta) \phi_j(\theta) + \sinh(\phi(\k \theta))^2 \s_{ij}.
\end{align*}
We therefore have
\begin{align*}
\tilde {\V}_{\k}(h) &= \dint_{\bS^{n-1}} \sqrt{ \det{\g^{\k}}}\, d\theta_1 \wedge \hdots \wedge d\theta_{n-1} \\
&=\dfrac{1}{\k^{n-1}} \dint_{\bS^{n-1}} \sqrt{ \det{\g}}\, d\theta_1 \wedge \hdots \wedge d\theta_{n-1} \\
&=\dfrac{1}{\k^{n-1}}  \V_1(h_0+\k h).
\end{align*}
\end{proof}

We now recall the following Lemma from \cite{H-L}, which will be applied in the proof of Lemma \ref{f-h0-estimate}.

\begin{lemma}[\cite{H-L}] \label{diff-ineq-lemma}
Let $V:[a,b] \To \R$ be non-decreasing. Suppose that $V' \geq F(V)$ holds almost everywhere in $[a,b]$. Suppose that $F$ is non-decreasing and continuously differentiable. Let $Y$ be a $C^2$ function satisfying 
\bee
Y' = F(Y) \ \ \text{and} \  \ Y(a) \leq V(a).
\eee
Then $Y \leq V$ on $[a,b]$.
\end{lemma}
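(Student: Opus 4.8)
The plan is to run the standard sub-/super-solution comparison argument for the scalar ODE $Y'=F(Y)$, taking care that $V$ is only assumed non-decreasing (hence possibly discontinuous). First I would record the two structural facts that make everything work: since $V$ is non-decreasing on the compact interval $[a,b]$ it is bounded, $V(a)\le V\le V(b)$, and since $Y\in C^2([a,b])$ it too has bounded range, so $F$, being $C^1$, is Lipschitz on a fixed closed interval $I$ containing both ranges, with constant $L=\sup_I|F'|<\infty$. Second, by Lebesgue's theorem for monotone functions one has $\int_s^t V'\le V(t)-V(s)$ for $a\le s\le t\le b$, and combining this with $V'\ge F(V)$ a.e.\ gives the integral supersolution inequality $V(t)\ge V(s)+\int_s^t F(V)$.

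Next I would argue by contradiction: suppose $Y(c)>V(c)$ for some $c\in(a,b]$, and set $d=\sup\{t\in[a,c] : Y(t)\le V(t)\}$, a nonempty set since $Y(a)\le V(a)$. Using that $Y$ is continuous and $V$ non-decreasing (so $V(d^-)\le V(d)$), any sequence $t_n\uparrow d$ with $Y(t_n)\le V(t_n)$ yields $Y(d)=\lim Y(t_n)\le\limsup V(t_n)\le V(d)$; the same observation applied at $c$ shows $d<c$, for otherwise $Y(c)\le V(c)$. By maximality of $d$ we get $Y(t)>V(t)$ for every $t\in(d,c]$.

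Then I would integrate on $[d,c]$. From $Y'=F(Y)$, $Y(t)=Y(d)+\int_d^t F(Y)$, while the integral supersolution inequality together with $V(d)\ge Y(d)$ gives $V(t)\ge Y(d)+\int_d^t F(V)$. Subtracting, the absolutely continuous function $\psi(t):=\int_d^t\big(F(Y)-F(V)\big)$ satisfies $\psi(d)=0$ and $Y(t)-V(t)\le\psi(t)$ on $[d,c]$, so in particular $\psi>0$ on $(d,c]$. On $(d,c]$ we have $0<Y-V\le\psi$, hence the Lipschitz bound gives $\psi'(t)=F(Y(t))-F(V(t))\le L\,(Y(t)-V(t))\le L\psi(t)$ for a.e.\ $t$. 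Gronwall's inequality for absolutely continuous functions then forces $\psi(t)\le\psi(d)e^{L(t-d)}=0$ on $[d,c]$, contradicting $\psi(c)\ge Y(c)-V(c)>0$. Therefore $Y\le V$ on $[a,b]$.

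The main obstacle is really the only delicate point: $V$ is merely non-decreasing, not absolutely continuous, so it may carry jumps or a singular part. This forces one to replace the naive ``$V'\ge F(V)$, hence $V(t)-V(s)\ge\int F(V)$'' by the inequality coming from Lebesgue's theorem (the jumps and singular part only help, since they make $V$ larger), and to pass to one-sided limits carefully when locating the last contact point $d$. I note that the monotonicity of $F$ is not actually needed for this argument: only the local Lipschitz bound furnished by $F\in C^1$ is used, together with the fact that the two terms being compared are values of $F$ at the same argument. If $V$ were additionally known to be continuous, the proof would collapse to a one-line first-crossing argument.
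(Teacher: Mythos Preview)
The paper does not supply its own proof of this lemma; it is quoted from \cite{H-L} and used as a black box in the proof of Lemma~\ref{f-h0-estimate}. There is therefore nothing in the present paper to compare your argument against.

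On its own merits your argument is correct. The one genuinely delicate point---that a merely monotone $V$ need not be absolutely continuous---you handle properly: Lebesgue's theorem for monotone functions gives $V(t)-V(s)\ge\int_s^t V'\ge\int_s^t F(V)$, and any singular part of $V$ only pushes the inequality in the favorable direction. The last-contact-point construction is carried out carefully (in particular, the one-sided limit argument showing $Y(d)\le V(d)$ when $V$ may jump at $d$ is the right thing to check), and the Gronwall step is clean: $\psi$ is Lipschitz with $\psi(d)=0$ and $\psi'\le L\psi$ almost everywhere on $[d,c]$, forcing $\psi\le 0$ and contradicting $\psi(c)\ge Y(c)-V(c)>0$. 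Your closing observation that the hypothesis ``$F$ non-decreasing'' is not actually invoked is also accurate: only the local Lipschitz bound coming from $F\in C^1$ on a compact interval containing the ranges of $V$ and $Y$ is used.
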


We are ready to estimate the height of an asymptotically hyperbolic graph in terms of its mass.

\begin{lemma} \label{f-h0-estimate}
Suppose that $f$ is a balanced asymptotically hyperbolic function, whose graph has scalar curvature $R(f) \geq -n(n-1)$ in $\Hb^{n+1}$ $(n \geq 3)$ and mass $\m := \m(f)$. Suppose $f$ has upward pointing mean curvature vector field and that the level sets of $f$, $\S_h$,  are star-shaped and outer-minimizing for almost every $h$ in the range of $f$. Then there exists a constant $C=C(n)$, such that 
\be
0 < \sup(f) - h_0 < C\m^{\frac{1}{n-2}},
\ee
where $h_0$ is defined in \eqref{h0}.
\end{lemma}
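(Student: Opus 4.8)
The plan is to reduce the estimate on $\sup(f) - h_0$ to a differential inequality for the rescaled volume function and then integrate it using Lemma \ref{diff-ineq-lemma}. First I would pass to the rescaled function $\tf(r,\theta) = \m^{-\frac{1}{n-2}}(f(\m^{\frac{1}{n-2}}r,\theta) - h_0)$ with $\k = \m^{\frac{1}{n-2}}$. By Lemma \ref{scalar-cond-pres} we have $\Rk[\tf] \ge 0$, by Lemma \ref{mass-1} we have $\m_\k(\tf) = 1$, and by Lemma \ref{vol-rel} the rescaled volume satisfies $\tilde\V_\k(h) = \k^{-(n-1)}\V_1(h_0 + \k h)$. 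Note $\sup(\tf) = \k^{-1}(\sup(f) - h_0)$, so it suffices to produce a uniform (in $\m$, hence in $\k$) bound $\sup(\tf) < C$. The upward mean curvature and Lemma \ref{lemma-height} give that $\tilde\V_\k$ is non-decreasing on $(-\infty, \sup(\tf))$ and finite below the supremum; the star-shaped and outer-minimizing hypotheses descend to the level sets of $\tf$.

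Next I would identify the starting point of the integration. By the definition \eqref{h0} of $h_0$ and the volume relation of Lemma \ref{vol-rel}, the value $h = 0$ corresponds to $\V_1(h_0)$ being comparable to $\max\{2\beta\m^{\frac{n-1}{n-2}}\w_{n-1}, 2\beta\w_{n-1}\m\}$, so that $\tilde\V_\k(0) = \k^{-(n-1)}\V_1(h_0)$ equals (up to the normalization in $h_0$) $2\beta\w_{n-1}$ — using $\k^{n-2} = \m$ to cancel the powers of $\m$. In particular $\tilde\V_\k(0) > \frac{c_n}{n-1}\m_\k(\tf) = \frac{c_n}{n-1} = 2\w_{n-1}$ because $\beta > 1$, which is precisely the hypothesis needed to invoke Lemma \ref{vol-k-estimate}. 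That lemma gives, for $h$ in the relevant range,
\be
\ddt{}{h}\tilde\V_\k(h) \ge c_n \dfrac{2}{3\sqrt 3}\( \dfrac{1}{2\w_{n-1}}\tilde\V_\k(h) - 1 \)^{3/2},
\ee
using $\m_\k(\tf) = 1$. Setting $W(h) = \frac{1}{2\w_{n-1}}\tilde\V_\k(h) - 1$, this reads $W' \ge c\, W^{3/2}$ for an explicit dimensional constant $c$, and $W(0) \ge \beta - 1 > 0$.

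Then I would apply Lemma \ref{diff-ineq-lemma} with $F(W) = c\,W^{3/2}$ (non-decreasing and $C^1$ on $W \ge 0$) and the comparison solution $Y$ of $Y' = cY^{3/2}$, $Y(0) = \beta - 1$, namely $Y(h) = \big((\beta-1)^{-1/2} - \tfrac{c}{2}h\big)^{-2}$. Since $Y$ blows up at the finite time $h_* = \frac{2}{c}(\beta-1)^{-1/2}$, and $Y \le W$ forces $\tilde\V_\k$ to blow up no later than $h_*$ while $\tilde\V_\k$ must stay finite strictly below $\sup(\tf)$ (Lemma \ref{lemma-height}), we conclude $\sup(\tf) \le h_*$. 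Unwinding, $\sup(f) - h_0 = \k\sup(\tf) \le \k h_* = h_* \m^{\frac{1}{n-2}}$, giving the claimed bound with $C = C(n)$ (absorbing the harmless dependence on the fixed $\beta$). The strict inequality $0 < \sup(f) - h_0$ follows since $f$ is non-constant and $\tilde\V_\k$ is strictly increasing past $h = 0$ by the differential inequality, so $\sup(\tf) > 0$. The main obstacle I anticipate is bookkeeping the normalization in \eqref{h0}: one must check carefully that the $\max\{\cdots\}$ in the definition of $h_0$ is engineered exactly so that $\tilde\V_\k(0)$ is bounded below by $2\beta\w_{n-1}$ uniformly in $\m$ (this is why both terms $\m^{\frac{n-1}{n-2}}$ and $\m$ appear — to handle $\m \ge 1$ and $\m < 1$), and that the level sets of $\tf$ inherit smoothness and the star-shaped/outer-minimizing properties for almost every height so that Lemmas \ref{vol-k-estimate} and \ref{vol-rel} genuinely apply.
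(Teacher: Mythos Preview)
Your proposal is correct and follows essentially the same route as the paper: rescale to $\tf$ with $\k = \m^{1/(n-2)}$, use Lemmas \ref{scalar-cond-pres}, \ref{mass-1}, and \ref{vol-rel} to reduce to the differential inequality of Lemma \ref{vol-k-estimate} with unit mass and initial value $\tilde\V_\k(0) \ge 2\beta\w_{n-1}$, then apply Lemma \ref{diff-ineq-lemma} to compare with the explicit blow-up solution of $Y' = cY^{3/2}$. The only cosmetic difference is that you write the comparison ODE in terms of $W = \tfrac{1}{2\w_{n-1}}\tilde\V_\k - 1$ and give the solution explicitly, whereas the paper leaves $Y$ in terms of $\tilde\V_\k$ and simply observes it blows up at a finite dimensional time.
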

\begin{proof}
Define $\tilde f$ as in \eqref{f-tilde}. Denote by $\tilde\V(h)$ the volume of the level set $\tilde\S_h = \{ \tilde f = h \}$ with respect to the metric $b_{\k}$ with $\k =\m^{\frac{1}{n-2}}$. (Recall that by Lemma \ref{mass-1}, with this choice of $\k$, we have $\m_{\k}(\tf) = 1$.) Then we can define $\tilde \V(0) =  \lm{h}{0} \tilde \V_{\m^{\frac{1}{n-2}}} (h) $ and hence
\begin{align*}
\tilde \V (0) = \tilde \V_{\m^{\frac{1}{n-2}}}( 0 ) &= \lm{h}{0} \tilde \V_{\m^{\frac{1}{n-2}}} (h) \\
&= \lm{h}{h_0^+} \m^{-\frac{n-1}{n-2}}\V_1(h) \\
&> 2\beta \m^{-\frac{n-1}{n-2}} \m^{\frac{n-1}{n-2}} \w_{n-1} \\
&=2 \beta \w_{n-1},
\end{align*}
where we have used Lemma \ref{vol-rel}. Therefore, by Lemma \ref{scalar-cond-pres}, we can apply Lemma \ref{vol-k-estimate} to obtain the differential inequality
\begin{align*}
\tilde \V'(h) \geq c_n\dfrac{2}{3\sqrt{3}}\(\dfrac{1}{2 \w_{n-1}}\tilde \V(h) -1 \)^{3/2}
\end{align*}
with
$\tilde \V( 0 ) \geq  2 \beta \w_{n-1}  $. Note that $\frac{1}{2 \w_{n-1}}\tilde \V(h)  > \beta > 1$, since $\V_1(h + h_0) > 2\beta \m^{\frac{n-1}{m-2}}\w_{n-1}$.

In order to apply Lemma \ref{diff-ineq-lemma}, define $Y$ to be the unique solution to
\be
\left\{
\begin{split}
Y'(h)&=c_n \dfrac{2}{3\sqrt{3}} \(\dfrac{1}{2 \w_{n-1}}Y(h) -1 \)^{3/2} \\
Y(0) &= 2 \beta \omega_{n-1}
\end{split} \right.
\ee
Then, by Lemma \ref{diff-ineq-lemma}, $Y(h) \leq \tilde \V(h)$. Thus for $n \geq 3$, $Y(h)$ must tend to infinity at a finite height $C=C(n)$. Then $\tilde\V$ also tends to infinity at a finite height $\tilde h_{\max} \leq C$. From Lemma \ref{lemma-height}, it follows that $0 < \sup \tf < C$, then $0 < \sup f - h_0 < C\m^{\frac{1}{n-2}}$.

\end{proof}

\section{Convergence in the Flat Norm} \label{convergence}

The flat convergence has been thoroughly studied and we refer the reader to \cite{Sormani} for a good general description and to find the precise references for a more complete exposition of this topic. For the reader's convenience we include some basic definitions and facts.

Given a Riemannian manifold $M$, recall that a submanifold $N$ can be seen as an integral current $T$ (of multiplicity one) and then its boundary $\pr T$ is $\pr N$ viewed as an integral current. In this case, the mass of the current $\bM(T)$ is simply the volume of the submanifold $N$.

\begin{definition}
Let $U$ be an open set in $\Hb^{n+1}$, and let $T_1$ and $T_2$ be integral $k$-currents in $\Hb^{n+1}$. Let $\bM_U$ denote the mass of a current in $U$. The flat distance between $T_1$ and $T_2$ in $U$ is defined as
\bee
d_{\mathcal{F}}(T_1,T_2) = \inf \{ \bM_U(A) + \bM_U(B) \, : \, T_1-T_2=A+\pr B  \},
\eee
where the infimum is taken over all integral $k$-currents $A$, and all $(k+1)$-currents $B$ in $\Hb^n$.
\end{definition}

We now estimate the flat distance of the graph of an asymptotically hyperbolic function $f$ with $\{ s = h_0 \}$, which is isometric to $\Hb^n$. We will follow the idea from \cite{H-L}. An $\Hb$-ball of radius $\rho$ is the set of points in $\Hb^{n+1}$ whose distance to the origin is less or equal to $\rho$. In terms of our coordinate system it can be described as the set of coordinate points $(s,r,\theta)$ such that $\cosh^2(r)s^2 + \sinh^2(r) \leq \rho^2$, together with the origin.

\begin{thm} \label{F-distance}
For $n \geq 3$, let $U$ be an $\Hb$-ball of radius $\rho$ in $\Hb^{n+1}$. Let $f$ be a balanced asymptotically hyperbolic function of mass $\m = \m(f)$, whose graph has scalar curvature $R(f) \geq -n(n-1)$ and mean curvature vector pointing upward. Suppose that the level sets $\S_h$ of $f$ are star-shaped and outer-minimizing for almost every $h$. Then
\bee
d_{\mathcal{F}_U}(\graph[f],\{ s=h_0 \}) \leq \tilde{c}_n [k_1 \m + k_2 \m^{\frac{1}{n-2}}],
\eee
where $\tilde{c}_n$ is a constant depending only on $n$ and $k_1$ and $k_2$ are constants depending only on $\rho$.
\end{thm}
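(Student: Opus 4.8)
The plan is to follow the strategy of \cite{H-L} for the flat distance estimate, decomposing the region between $\graph[f]$ and $\{s=h_0\}$ inside $U$ into the part below $h_0$ and the part between $h_0$ and $\sup f$, and estimating each contribution separately. First I would work in the coordinate system $(s,r,\theta)$ on $\Hb^{n+1}$ and set $T = \graph[\bar f]$ and $T_0 = \{s=h_0\}$, both restricted to $U$ and viewed as integral currents. The natural choice for the filling is $B = \{(s,x) \in U : \min(h_0,\bar f(x)) \leq s \leq \max(h_0,\bar f(x))\}$, i.e.\ the region in $U$ swept out between the two hypersurfaces, together with a suitable current $A$ supported near $\partial U$ accounting for the fact that $T - T_0 - \partial B$ need not vanish exactly (there is a ``side'' contribution where the graph meets $\partial U$, plus, if $\Omega \neq \emptyset$, the cylinder over $\partial\Omega$). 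Then $d_{\mathcal F_U}(T,T_0) \leq \bM_U(A) + \bM_U(B)$, and the task reduces to bounding these two masses.

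The key estimates are then: (1) $\bM_U(B) \leq \tilde c_n (\rho+1)\,(\sup f - h_0)$ roughly speaking, because $B$ is contained in the slab $h_0 \leq s \leq \sup f$ intersected with $U$ — using Lemma \ref{f-h0-estimate} and Lemma \ref{lemma-height}, which give $0 < \sup f - h_0 < C\m^{1/(n-2)}$ and $f < h_{\max}$ with $\V(h)$ controlled, together with the fact that the cross-sectional $\Hb^n$-volume of $U$ at height $s$ is bounded by a constant depending on $\rho$ — this produces the $\rho^n \m^{1/(n-2)}$ term; the $(\rho+1)\m$ term comes from the contribution of the region where $\bar f < h_0$, whose swept volume is controlled by $\int (h_0 - \bar f)$ over the relevant domain, which via the coarea formula equals $\int_{-\infty}^{h_0} \V_1(h)\,dh$ near $h_0$ and is bounded using $\V_1(h) \leq 2\beta\w_{n-1}\m$ for $h \leq h_0$ (the defining property of $h_0$ in \eqref{h0}) over a range of heights proportional to $\rho$, plus a term accounting for the part of $B$ over $\Omega$ if the horizon is present, bounded by the Penrose inequality \eqref{RPI-AH} which gives $|\partial\Omega|_b \leq 2\w_{n-1}\m / V(r_0)$. (2) $\bM_U(A) \leq \tilde c_n \rho^{n}(\sup f - h_0)$: the side current $A$ lives on the cylinder $\{(s,x): x \in \partial U_{\text{base}},\ h_0 \leq s \leq \bar f(x)\}$ whose area is at most (perimeter of the base of $U$) times $(\sup f - h_0)$, again $\lesssim \rho^{n-1}\cdot C\m^{1/(n-2)}$, absorbed into the stated bound; the horizon cylinder contributes $|\partial\Omega|_b \cdot \mathrm{diam}$, again $\lesssim \rho\,\m$.

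Collecting, $d_{\mathcal F_U} \leq \tilde c_n[(\rho+1)\m + \rho^n\m^{1/(n-2)}]$, which is the claim. The main obstacle I anticipate is making the decomposition $T - T_0 = A + \partial B$ precise at the level of currents — in particular handling the boundary $\partial\Omega$ correctly (the extended function $\bar f$ is only Lipschitz, constant on each component of $\overline\Omega$, so the graph current of $\bar f$ differs from $\graph[f]$ by the flat cylinder over $\partial\Omega$, whose mass must be bounded by \eqref{RPI-AH}), and being careful that the ``truncation at $\partial U$'' genuinely contributes a current $A$ of the claimed mass rather than something larger. The coarea-type computation bounding $\bM_U(B)$ by integrals of $\V_1$ and the use of the definition of $h_0$ to control $\int_{h_0-\text{const}\cdot\rho}^{h_0}\V_1\,dh$ is the other place requiring care, since one must split at $h_0$ and use monotonicity of $\V_1$ (Lemma \ref{lemma-height}) below $h_0$ together with the height estimate above $h_0$.
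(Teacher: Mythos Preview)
Your overall architecture matches the paper's: split the filling region at height $h_0$, use Lemma~\ref{f-h0-estimate} for the part above, and the smallness of the sublevel sets for the part below. But two of the key steps are not right as stated.

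First, the current $A$. The difference between $\graph[f]$ and $\graph[\bar f]$ is \emph{not} a cylinder over $\partial\Omega$; it is the flat horizontal $n$-disk $\{\bar f|_{\partial\Omega}\}\times\Omega$ filling in the horizon. This is exactly what the paper takes for $A$ (with downward orientation), so that $\graph[f]-A=\graph[\bar f]$ and hence $\graph[f]-\{s=h_0\}=A+\partial B$ globally. Its mass is $\vol(\Omega)$, which the paper bounds by the isoperimetric inequality in $\Hb^n$ together with the Penrose-type inequality \eqref{RPI-AH}: $\bM_U(A)\leq \vol(\Omega)\leq \tilde c_n\,|\partial\Omega|_b\leq \tilde c_n\,\m$. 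Your cylinder does not sit in the right place in the decomposition, and your ``side'' current near $\partial U$ is unnecessary: with the definition of $d_{\mathcal F_U}$ used here, $A$ and $B$ are taken globally in $\Hb^{n+1}$ and only their masses inside $U$ are counted, so no truncation correction is needed.

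Second, the $B_-$ estimate. Your claim that $\int (h_0-\bar f)$ equals $\int_{-\infty}^{h_0}\V_1(h)\,dh$ ``via the coarea formula'' is false: the layer-cake/Fubini identity gives
\[
\bM_U(B_-)=\int_{-\rho}^{h_0}\vol\bigl(\Omega_h\cap U\bigr)\,dh,
\]
with the \emph{volume} of $\Omega_h$ on the right, not its perimeter $\V_1(h)$. The definition of $h_0$ only controls $\V_1(h)$, so the missing ingredient is again the isoperimetric inequality in $\Hb^n$, which gives $\vol(\Omega_h)\leq \tilde c_n\,\V_1(h)$. Combining this with monotonicity of $\V_1$ (Lemma~\ref{lemma-height}) and the definition of $h_0$ yields $\vol(\Omega_h)\leq \tilde c_n\,\m$ for each $h\leq h_0$, and integrating over $h\in[-\rho,h_0]$ produces the $(\rho+1)\m$ term. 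Without this isoperimetric step your bound on $B_-$ does not go through.
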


\begin{proof}
Denote by $\bar f$ the extension of $f$ obtained by setting $f$ to be constant on  $\bO$ and $\bar f = f$ on $\Hb^n \setminus \Omega$. We want to find currents $A$ and $B$ such that $\graph[f]-\{s=h_0\} = A +  \pr B$ in $U$, where both graphs are taken with the upward orientation. As in \cite{H-L}, define $A$ to be the region bounded by $\pr (\graph[f])$ (which lie in a horizontal plane) taken with the downward orientation, so that $\graph[f] - A$ is $\graph[\bar f]$, i.e., $f$ with the boundary filled in. Let $B$ be the region of $\Hb^{n+1}$ under $\graph[f] - A$  minus the region of $\Hb^{n+1}$ under $\{ s=h_0\}$, both taken with positive orientation. Then $(\graph[f] - A) - \{ s=h_0 \} =\pr B$. Now, $B=B_+ + B_-$, where $B_+$ is the region of $\Hb^{n+1}$ below $\graph[f] - A$ and above $\{ s=h_0 \}$ with positive orientation, and $B_-$ is the region of $\Hb^{n+1}$ with negative orientation above $\graph[f]-A$ and below $\{ s=h_0 \}$ (see Figure \ref{flat-dist}). Since they are disjoint, it follows that $\bM_U(B) = \bM_U(B_+) + \bM_U(B_-)$.

\begin{figure}[ht!]
\begin{center}
\begin{tikzpicture}
    \node[anchor=south west,inner sep=0] at (0,0) {\includegraphics[scale=.8]{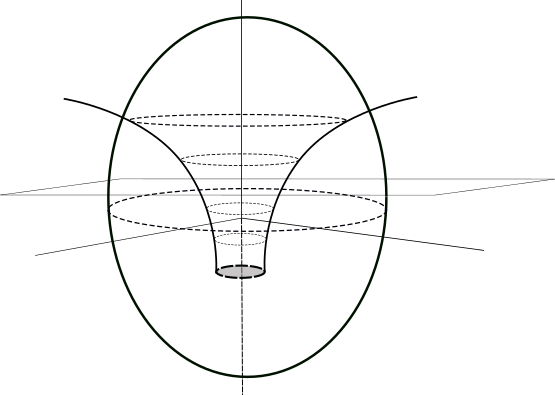}};
    \node at (-1.2,4) {$\{ s = h_0 \}$};
    \node at (5.5,2.1) {$A$};
    \node at (3,5) {$B_+$};
    \node at (7.3,5) {$B_+$};
    \node at (4.85,2.95) {$B_-$};
    \node at (6.5,8) {$U$};
    \node at (9.8,6.5) {$\graph[f]$};
    \node at (9.3,2.5) {$\Hb^n$};
    \node at (5.15,8.6) {$s$};
\end{tikzpicture}
\end{center}
\caption[]{Choice of the sets $A$ and $B$ to estimate the flat distance between $\graph[f]$ and $\{ s = h_0 \}$. } \label{flat-dist}
\end{figure}

Note that since the level sets are assumed to be mean convex for almost every height, when the boundary $\pr \Omega$ is non-empty, it is mean convex in $\Hb^n$. By the isoperimetric inequality in $\Hb^n$ \cite{Yau} combined with the Riemannian Penrose-like inequality \eqref{RPI-AH} from \cite{D-G-S}, we have
\bee
\bM_U(A) = \vol(A \cap U) \leq \tilde{c}_n | \pr \Omega | \leq  \tilde{c}_n \m,
\eee
where $\tilde{c}_n$ is a constant depending only on $n$. From Lemma \ref{f-h0-estimate},
\bee
\bM_U(B_+) = \vol(B_+ \cap U) \leq C \cosh(\rho) \sinh(\rho)^{n-1} \rho \m^{\frac{1}{n-2}}.
\eee
For each $h \leq h_0$ (when this set is non-empty), using again the isoperimetric inequality in $\Hb^n$, the fact that $\V_1(h)$ is non-decreasing and the definition of $h_0$, we can estimate the volume on each slice by
\bee
\vol(B_- \cap U \cap \{ s=h \} ) \leq \vol(\Omega_h ) \leq \tilde{c}(n) \V_1(\S_h) \leq  \tilde{c}(n) \V_1(\S_{h_0}) \leq \tilde{c}_n \m.
\eee
Integrating the over the region $s \in [-\rho,\rho]$, the above implies that
\bee
\bM_U( B_- ) \leq \tilde{c}_n \cosh(\rho)\rho \m,
\eee
for some constant $\tilde{c}_n$ depending only on $n$. Therefore,
\bee
d_{F_U}(\graph[f],\{ s=h_0 \}) \leq \tilde{c}_n[(\cosh(\rho)\rho + 1)\m + \cosh(\rho)\sinh(\rho)^{n-1}\rho \m^{\frac{1}{n-2}}].
\eee
\end{proof}

We now show the convergence of a sequence of balanced asymptotically hyperbolic graphs (properly normalized) to the hyperbolic space $\Hb^n$, in the sense of currents. Clearly, since $M^i = \graph[f_i]$ and $\m^i = \m(f_i)$, the proof of Theorem \ref{main} follows.

\begin{thm}
Let $f_i$ be a sequence of balanced asymptotically hyperbolic graphs with scalar curvature  $R(f_i) \geq -n(n-1)$ and mean curvature vector pointing upward. In addition, suppose that almost every level set of each $f_i$ is star-shaped and outer-minimizing in $\Hb^n$. Normalize the height of $f_i$ so that $h_0=0$. Then if  $\m(f_i) \to 0$, then the sequence $\{ f_i \}$ converges to $\{ s=0 \}$ in the sense of currents.
\end{thm}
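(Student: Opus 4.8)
The plan is to deduce the statement directly from the flat-distance estimate of Theorem~\ref{F-distance}, after reconciling the height normalization. First I would note that translating each $f_i$ vertically so that the associated height $h_0$ (as defined in \eqref{h0}) equals $0$ is precisely the normalization imposed in the statement: by the definition of $h_0$ together with the lower semicontinuity of $\V_1$, the resulting level set $\S_0=\{f_i=0\}$ has $b$-volume equal to $\max\{2\beta(\m(f_i))^{\frac{n-1}{n-2}}\w_{n-1},\,2\beta\w_{n-1}\m(f_i)\}$, which is exactly the volume condition on $M^i\cap\{s=0\}$ used in Theorem~\ref{main}. All the remaining hypotheses here — balanced, $R(f_i)\geq -n(n-1)$, mean curvature vector pointing upward, and almost every level set star-shaped and outer-minimizing in $\Hb^n$ — are exactly the ones required by Theorem~\ref{F-distance}.

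Next I would fix an arbitrary $\Hb$-ball $U=U_\rho$ of radius $\rho>0$ centered at the origin of $\Hb^{n+1}$ and apply Theorem~\ref{F-distance} to each $f_i$ with this $U$, obtaining
\[
d_{\mathcal{F}_U}(\graph[f_i],\{s=0\})\leq \tilde c_n\big[(\rho+1)\m(f_i)+\rho^n\,\m(f_i)^{\frac{1}{n-2}}\big],
\]
where $\tilde c_n$ depends only on $n$ (and, crucially, not on $i$). Since $\rho$ is held fixed, the right-hand side is a fixed continuous function of $\m(f_i)$ that vanishes when $\m(f_i)=0$; hence the hypothesis $\m(f_i)\to 0$ forces $d_{\mathcal{F}_U}(\graph[f_i],\{s=0\})\to 0$ as $i\to\infty$.

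Finally I would let $\rho\to\infty$: the family $\{U_\rho\}_{\rho>0}$ exhausts $\Hb^{n+1}$, and on each $U_\rho$ the flat distance between $\graph[f_i]$ (viewed as an integral $n$-current of multiplicity one with its upward orientation) and $\{s=0\}$ tends to zero, while both currents have locally finite mass in every $U_\rho$; this local flat convergence on an exhaustion is exactly what is meant by convergence in the sense of currents, and the proof is complete. As for the main obstacle, there is essentially none beyond Theorem~\ref{F-distance} itself: this final theorem is a repackaging of that estimate with the extra input $\m(f_i)\to 0$. The only points requiring a little care are the identification of the normalization $h_0=0$ with the volume condition of Theorem~\ref{main}, and the verification that ``convergence in the sense of currents'' is captured by the local flat distance on the exhausting $\Hb$-balls; both are routine given the definitions of Section~\ref{convergence}.
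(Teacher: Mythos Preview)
Your proposal is correct and follows essentially the same approach as the paper: both reduce the statement to the flat-distance estimate of Theorem~\ref{F-distance} applied on an $\Hb$-ball containing the support of a test form, so that $\m(f_i)\to 0$ forces the local flat distance to $\{s=0\}$ to vanish. The only cosmetic difference is that the paper tests directly against a compactly supported $n$-form $\omega$ using the explicit decomposition $A_i+\pr B_i$ from the proof of Theorem~\ref{F-distance}, whereas you phrase this as local flat convergence on an exhaustion by $\Hb$-balls; these are equivalent formulations of convergence in the sense of currents.
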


\begin{proof}
The proof is identical to \cite[Theorem 1.1]{H-L} and it is included for completeness. Let $\w$ be a compactly supported $n$-form in $\Hb^{n+1}$. Then there exists a $\rho > 0$ such that the support of $\w$ is contained in an $\Hb$-ball $U$. Let $\Pi = \{ s=0 \}$. We have
\be \label{curr-conv}
(\graph[f_i] - \Pi)(\w) = A_i(\w) + B_i(d\w),
\ee
with $A_i$ and $B_i$ as in the proof of Theorem \ref{F-distance}. Since the right hand side of \eqref{curr-conv} is controlled by the flat distance of $\graph[f_i]$ and $\{ s = 0 \}$, by Theorem \ref{F-distance}, $(\graph[f_i] - \Pi)(\w)  \to 0$ as $\m(f_i) \to 0$.
\end{proof}

\bibliographystyle{amsplain}
\bibliography{PMTAH}
\vfill

\end{document}